\documentclass[11pt,leqno]{amsart}

\usepackage{graphicx}
\usepackage{indentfirst, csquotes}
\usepackage{amssymb, amsmath, amsthm}
\usepackage{xcolor, paralist, hyperref, titlesec, fancyhdr, etoolbox}
\usepackage{lipsum} % ダミーテキスト
\usepackage{url}
\usepackage{tikz-cd}

\theoremstyle{plain}
\newtheorem{theorem}{Theorem}[section]
\newtheorem{lemma}[theorem]{Lemma}
\newtheorem{prop}[theorem]{Proposition}

\newtheorem{rem}[theorem]{Remark}

\theoremstyle{definition}
\newtheorem{definition}[theorem]{Definition}
\newtheorem{ex}[theorem]{Example}

\newcommand{\Conj }{\mathrm{Conj} }
\newcommand{\Core }{\mathrm{Core} }
\newcommand{\Fix }{\mathrm{Fix} }
\newcommand{\Inn }{\mathrm{Inn} }
\newcommand{\mr }{\mathrm }

\titleformat{\section}
  {\normalfont\normalsize\bfseries\centering} % roman + 太字 + 中央揃え
  {\thesection}{1em}{}

\titleformat{\subsection}
  {\normalfont\normalsize\bfseries} % roman + 太字
  {\thesubsection}{1em}{}

\titlespacing*{\section}{0pt}{12pt plus 2pt minus 2pt}{12pt plus 2pt minus 2pt}
\titlespacing*{\subsection}{0pt}{10pt plus 2pt minus 2pt}{8pt plus 2pt minus 2pt}

\renewenvironment{proof}{\noindent\textbf{Proof.}\ }{\hfill$\Box$\vskip 5pt}

\hypersetup{ colorlinks=true, linkcolor=black, filecolor=black, urlcolor=blue }

\title{On embeddings of homogeneous quandles}
\author[Ayu Suzuki]{Ayu Suzuki}
\address{
Division of Mathematical and Physical Sciences, Graduate School of Science \\
Japan Women’s University\\
2-8-1 Mejirodai, Bunkyo-ku, Tokyo, 112-8681, Japan}
\email{m2016044sa@ug.jwu.ac.jp}

\begin{document}
\let\thefootnote\relax
\footnotetext{MSC2020: Primary 20N02, Secondary 57K10, 57K12 and 53C35.}
\keywords{homogeneous quandle, quandle embedding}

\maketitle

% MSC

\begin{abstract}
In this paper, we study the embedding problem of homogeneous quandles.
Homogeneous quandles can be viewed as a natural generalization of symmetric pairs and symmetric spaces arising in Riemannian geometry, and their origins lie in this theory. In particular, each symmetric space (or equivalently, symmetric pair) naturally gives rise to a quandle, providing a fundamental class of examples of homogeneous quandles.
% We give a necessary and sufficient condition under which a quandle homomorphism from the homogeneous quandle associated with a quandle triplet $(G,H,\sigma)$ into a conjugation quandle of a group is an embedding.
We give a sufficient condition for a quandle homomorphism from the homogeneous quandle associated with a quandle triplet to a conjugation quandle to be an embedding.
% This result provides a generalization of the embedding theorem
% of Dhanwani, Raundal and Singh for generalized Alexander quandles.
As applications of the main theorem,
we reinterpret Bergman’s embedding of core quandles
in the framework of homogeneous quandles,
and construct explicit embeddings of several geometric examples,
including unoriented and oriented Grassmann quandles
and rotation quandles of $S^2$ arising from symmetric spaces.
\end{abstract}
% -----------------------------
% 本文例
% -----------------------------
\section{Introduction}

%\subsection{Quandles and their origins}
% The notion of quandle was introduced independently by Joyce \cite{Joyce} and
% Matveev~\cite{Matveev} in 1982 as algebraic structures arising from knot theory.
% Their original motivation was to describe algebraically the behavior of
% Reidemeister moves on knot diagrams.
% This led to the definition of the knot quandle (or fundamental quandle),
% which provides a powerful invariant of knots.
% From this origin, quandles have been studied primarily as algebraic structures
% associated with knot theory.

% On the other hand, quandles also have deep connections with geometry.
% In particular, there exist natural classes of quandles arising from
% symmetric spaces and homogeneous spaces,
% and from this viewpoint, quandles can be regarded as algebraic generalizations
% of symmetric spaces.
% The relationship between homogeneous quandles and symmetric spaces has been
% studied, for example, by Furuki and Tamaru~\cite{FurukiTamaru}.

The notion of quandle was introduced independently by Joyce~\cite{Joyce}
and Matveev~\cite{Matveev} in 1982 as an algebraic structure arising from knot theory,
and has since played a very important role in knot theory.
On the other hand, quandles also have deep connections with geometry.
In particular, there exist natural classes of quandles arising from symmetric spaces
and homogeneous spaces.
From this viewpoint, quandles can be regarded as algebraic structures
closely related to symmetric spaces.
The relationship between homogeneous quandles and symmetric spaces has been studied,
for example, by Furuki and Tamaru~\cite{FurukiTamaru}.

% In this paper, we focus on quandles with such geometric backgrounds,
% especially quandles equipped with smooth structures and homogeneous properties.

% %\subsection{Embedding problems of quandles}
% %\label{subsec:embedding}

% An \emph{embedding of a quandle} means realizing a given quandle
% as a subquandle of a conjugation quandle of some group.
% This problem is important for understanding the algebraic structure of quandles
% via group-theoretic methods,
% and it has also been investigated extensively from the viewpoint of applications
% to knot theory.

% Joyce~\cite{Joyce} showed that free quandles can be embedded into
% conjugation quandles of groups,
% suggesting that quandle theory can be interpreted as a theory of group conjugation.
% Since then, embeddings have been constructed for various classes of quandles;
% however, no general criterion is known to determine whether an arbitrary quandle
% admits an embedding into a group.

% In this context, Bardakov, Dey, and Singh~\cite{BardakovDeySingh}
% explicitly posed the following problem:
% \begin{quote}
% Given a quandle $X$, does there exist a group $G$ such that
% $X$ can be embedded into the conjugation quandle $\mathrm{Conj}(G)$?
% \end{quote}
% This question has stimulated much subsequent research aiming at a systematic
% understanding of quandle embeddings.
In this paper, we focus on quandles with such geometric backgrounds,
especially quandles equipped with smooth structures and homogeneous properties. An \emph{embedding of a quandle} means realizing a given quandle
as a subquandle of a conjugation quandle of some groups.
% This notion plays an important role in understanding the algebraic structure
% of quandles via group-theoretic methods,
% and it has also been extensively studied from the viewpoint of applications
% to knot theory.
% Joyce~\cite{Joyce} showed that free quandles can be embedded into
% conjugation quandles of groups,
% suggesting that quandle theory can be interpreted as a theory of group conjugation.
% Since then, embeddings have been constructed for various classes of quandles;
% however, no general criterion is known to determine whether an arbitrary quandle
% admits such an embedding.
% In this context, the embedding problem was explicitly formulated
% by Bardakov, Dey and Singh~\cite{BardakovDeySingh}.
% Dhanwani, Raundal and Singh~\cite{DhanwaniRaundalSingh} proved that generalized Alexander quandles associated with
% fixed-point-free automorphisms can be embedded into groups.
% Their result provides an important class of positive examples
% for the embedding problem.
% Bergman~\cite{Bergman} studied \emph{core quandles},
% which are defined as group-theoretic analogues of symmetric spaces,
% and showed that they can be embedded into suitable groups.
% Bergman’s construction is mainly based on group-theoretic methods
% and reflects the algebraic properties of core quandles in a direct manner.
Joyce~\cite{Joyce} showed that free quandles can be embedded into
conjugation quandles of groups, suggesting that quandle theory can be interpreted as a
theory of group conjugation. Since then, embeddings have been constructed for various
classes of quandles; however, no general criterion is known to determine whether an
arbitrary quandle admits such an embedding. In this context, the embedding problem
was explicitly formulated by Bardakov, Dey and Singh~\cite{BardakovDeySingh}. Dhanwani, Raundal and
Singh~\cite{DhanwaniRaundalSingh} proved that generalized Alexander quandles associated with fixed-point-free
automorphisms can be embedded into groups. Their result provides an important class
of positive examples for the embedding problem. 
Moreover, Akita~\cite{Akita} showed that both generalized Alexander quandles
and twisted conjugation quandles can be explicitly embedded into conjugation
quandles of suitable groups. 
Bergman~\cite{Bergman} studied \emph{core quandles},
which are defined as group-theoretic analogues of symmetric spaces,
and showed that they can be embedded into suitable groups.
Bergman’s construction is mainly based
on group-theoretic methods and reflects the algebraic properties of core quandles in a direct manner.
On the other hand, in a joint work with Yonemura \cite{YonemuraSuzuki},
the author studied explicit embeddings
of \emph{spherical quandles} proposed by Eisermann.
The construction was carried out by Yonemura
using geometric methods in the category of smooth quandles.
In particular, we focused on the cases of $S^1$ and $S^3$,
which admit natural group structures.
We also clarified their relationships with Bergman’s embedding of core quandles~\cite{Bergman}
and Akita’s embedding of twisted conjugation quandles~\cite{Akita}.

%\subsection{Homogeneous quandles}

Both spherical quandles and core quandles belong to a distinguished class
called \emph{homogeneous quandles}.
A quandle is said to be homogeneous
if its automorphism group acts transitively on the underlying set.
Homogeneous quandles are closely related to symmetric spaces
and homogeneous spaces,
and their structures can often be described explicitly
in terms of group actions.
This connection has been studied, for example,
by Furuki and Tamaru~\cite{FurukiTamaru}.
From this viewpoint,
Bergman’s embedding of core quandles
and the Yonemura's embedding of spherical quandles
can both be interpreted as constructions
that reflect the underlying symmetries of the corresponding quandles.
This observation suggests that homogeneity plays a fundamental role
in the quandle embedding problem.

%\subsection{Main results}

% The main purpose of this paper is to develop a systematic understanding of
% the embedding problem for homogeneous quandles into conjugation quandles of groups.
The main purpose of this paper is to develop a systematic approach to embeddings of homogeneous quandles into conjugation quandles of groups.
More precisely, in Theorems \ref{result} and \ref{Main}, we give a sufficient condition for a quandle homomorphism constructed from a quandle triplet $(G,H,\sigma)$ to be an embedding of the associated homogeneous quandle into the conjugation quandle of $G \rtimes_\sigma \mathbb{Z}$ or $G$.

% More precisely, in Theorem \ref{Main} and Theorem \ref{result}, we give a necessary and sufficient condition
% for a map from a homogeneous quandle to a conjugation quandle of a group
% to be an embedding.
This result can be regarded as a natural generalization of
the embedding theorem of Dhanwani, Raundal and Singh for generalized Alexander quandles ~\cite{DhanwaniRaundalSingh}.
Furthermore, by applying this criterion,
we reinterpret Bergman’s embedding of core quandles
from the viewpoint of homogeneous quandles.
We also introduce quandles defined on oriented Grassmann manifolds
as a generalization of spherical quandles
and construct explicit embeddings for them.
In addition, several other new examples of quandle embeddings are provided.
These results clarify the role of homogeneity in the embedding problem
and contribute to a more unified understanding of the relationship between
quandle theory, group theory, and differential geometry.

%\subsection{Organization of the paper}

This paper is organized as follows.
In Section~\ref{sec.2}, we prepare several notions and results
needed to state main theorems.
In Subsection~\ref{sec.2.1}, we recall the definition of quandles
and introduce concrete examples that will be used throughout the paper.
In Subsection~\ref{sec.2.2}, we define quandle embeddings
and review known embeddings,
including the embedding of spherical quandles
and Bergman’s embedding of core quandles.
In Subsection~\ref{sec.2.3}, we discuss homogeneous quandles,
which play a central role in this paper,
and explain their relationship with quandle triplets.
Section~\ref{sec.3} is devoted to main theorems, Theorem \ref{result}, Theorem \ref{Main} of this paper.
We give a necessary and sufficient condition under which a quandle homomorphism constructed from a quandle triplet $(G,H,\sigma)$ embeds the quandle $Q(G,H,\sigma)$ into a conjugation quandle of a group.
% We give a necessary and sufficient condition
% for a quandle $Q(G,H,\sigma)$ associated with a quandle triplet $(G,H,\sigma)$
% to admit an embedding into a conjugation quandle of a group.
In Section~\ref{sec.example}, we present several applications of main theorems.
In Subsection~\ref{4.1}, we show that Bergman’s embedding of core quandles
can be reinterpreted within our framework.
In Subsections~\ref{spherical rotation quandle}--\ref{oriented grassmann}, we construct embeddings of
the $\theta$-rotation quandle from Example~\ref{theta},
the unoriented Grassmann quandle from Example~\ref{Grassmannquandle},
and the oriented Grassmann quandle from Example~\ref{orientedGrassmannquandle}, respectively.

\section{Preliminaries}\label{sec.2}
In this section, we prepare several notions and results
needed to state the main theorem.

\paragraph{\bf{Notation.}}
Throughout this paper, vectors are regarded as row vectors,
and matrix multiplication is taken from the right.
For a subgroup $H$ of a group $G$, the notation $H\backslash G$
denotes the set of right cosets $\{Hg \mid g \in G\}$.

\subsection{Quandle}\label{sec.2.1}
\begin{definition}[Quandle]
A \emph{quandle} is a set $X$ equipped with a binary operation $*: X \times X \to X$ satisfying the following three axioms: 

\begin{enumerate}
    \item $x*x = x$ for any $x\in X$;
    \item The map $S_y : X \to X$ defined by $x \mapsto x * y$
    is  bijective for any $y\in X$;
    \item $(x * y) * z = (x * z) * (y * z)$ for any $x, y, z\in X$.
\end{enumerate}
\end{definition}

The following fact goes back to Loos and Joyce ; see \cite{Loos, Joyce}.

\begin{prop}\label{prop:symmetricsp_quandle}
Let $M$ be a symmetric space with the point symmetry $\{s_y\}_{y \in M}$. 
Then $M$ admits a natural structure of a quandle given by
\[
x * y := s_y(x),
\]
where $s_y$ denotes the symmetry at $y$.
\end{prop}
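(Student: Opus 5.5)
We need to verify the three quandle axioms for $x*y := s_y(x)$, using the defining properties of a (Riemannian) symmetric space. Recall that for each $y\in M$, the point symmetry $s_y$ is an isometry of $M$ that fixes $y$, is involutive ($s_y\circ s_y=\mathrm{id}_M$), and has differential $-\mathrm{id}$ on $T_yM$. The plan is to derive each axiom from one of these properties, plus the uniqueness of the symmetry at a point.

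\emph{Axiom (1).} Since $s_x$ fixes $x$, we get $x*x=s_x(x)=x$ at once.

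\emph{Axiom (2).} The map $S_y$ is exactly $s_y$. Involutivity gives $s_y\circ s_y=\mathrm{id}_M$, so $S_y$ is a bijection and is its own inverse.

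\emph{Axiom (3).} Unwinding the definitions, the identity $(x*y)*z=(x*z)*(y*z)$ becomes
\[
s_z s_y (x) = s_{s_z(y)}\, s_z(x) \quad \text{for all } x,
\]
which is equivalent to the conjugation relation
\[
s_z\, s_y\, s_z^{-1} = s_{s_z(y)}.
\]
To prove this, set $\phi=s_z$, an isometry, and $y'=\phi(y)$. The map $\phi s_y\phi^{-1}$ has the following properties:
\begin{itemize}
\item it is an isometry, being a composition of isometries;
\item it is involutive, since $s_y$ is;
\item it fixes $y'$, because $\phi s_y\phi^{-1}(y')=\phi s_y(y)=\phi(y)=y'$;
\item its differential at $y'$ is $d\phi_y\circ(-\mathrm{id}_{T_yM})\circ d\phi_y^{-1}=-\mathrm{id}_{T_{y'}M}$.
\end{itemize}

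The main step is the uniqueness of the symmetry. An isometry of a connected Riemannian manifold is determined by its value and its differential at a single point. Both $\phi s_y\phi^{-1}$ and $s_{y'}$ are isometries that fix $y'$ and have differential $-\mathrm{id}$ there, so they coincide: $\phi s_y\phi^{-1}=s_{y'}$. This rigidity is the only place where any geometry enters; the rest is formal.

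If one instead works with Loos's axiomatic definition of a symmetric space, the relation $s_z s_y s_z = s_{s_z(y)}$ is itself one of the axioms. In that setting the proof is immediate, and axioms (1) and (2) are again the fixed-point and involutivity axioms.
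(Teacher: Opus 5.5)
Your proof is correct. The paper does not prove this proposition itself; it only attributes it to Loos and Joyce, so the comparison is with those sources.

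Your last paragraph is what that citation amounts to. In Loos's axiomatic definition, with $x\cdot y=s_x(y)$, the axioms $x\cdot x=x$, $x\cdot(x\cdot y)=y$ and $x\cdot(y\cdot z)=(x\cdot y)\cdot(x\cdot z)$ become exactly the three quandle axioms after rewriting $x\cdot y=y*x$. (The second one even yields an involutory quandle.) So in that setting there is nothing to prove.

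Your main argument is different in kind: it uses the fact that an isometry is determined by its value and differential at one point. This is the standard verification that a Riemannian globally symmetric space satisfies Loos's axioms. It makes the statement self-contained for the Riemannian reading, which is the one relevant to the paper's geometric examples (spheres and Grassmannians).

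You should state explicitly that $M$ is connected, as Helgason's definition does. Your rigidity step uses this, and it is genuinely needed: on a disconnected $M$ the symmetry at $y$ is not determined on the other components, and distributivity can fail.

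For example, take $M=A\sqcup B$ with $A,B$ two unit circles, and use the standard symmetry on each circle. For $z\in A$, let $s_z$ act trivially on $B$. For every $y\in B$, let $s_y$ act on $A$ as the symmetry at a fixed point $a_0\in A$. Each $s_y$ is then an involutive isometry with $y$ an isolated fixed point and $ds_y=-\mathrm{id}$. Yet for $z\in A$ and $y\in B$, the relation $s_zs_ys_z=s_{s_z(y)}$ restricted to $A$ says that the symmetry of $A$ at $s_z(a_0)$ equals the symmetry at $a_0$. This fails unless $s_z(a_0)=\pm a_0$.
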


\begin{ex}
    Let $\langle -, - \rangle$ denote the Euclidean inner product on $\mathbb{R}^{n+1}$. 
Define the $n$-dimensional sphere by $S^n = \{ x \in \mathbb{R}^{n+1} \mid \langle x, x \rangle = 1 \}.$ Define a binary operation $*$ on $S^n$ by
\[
    x * y := 2 \langle x, y \rangle \, y - x\quad(x, y\in S^n).
\]
Then $(S^n, *)$ forms a quandle, called the \emph{spherical quandle}.
\end{ex}
\begin{ex}
    Let $G$ be a group. Define a binary operation $*$ on $G$ by
\[
    g * h := h g^{-1} h \qquad (g, h \in G).
\]
Then $(G, *)$ forms a quandle, called the \emph{core quandle} of $G$, and is denoted by $\mathrm{Core}(G)$.
\end{ex}
\begin{ex}
   Let $G$ be a group. Define a binary operation $*$ on $G$ by
\[
    g * h := h^{-1} g h \qquad (g, h \in G).
\]
Then $(G, *)$ forms a quandle, called the \emph{conjugation quandle} of $G$, and is denoted by $\mathrm{Conj}(G)$.
\end{ex}

\begin{ex}[$\theta$--rotation quandle on the sphere $S^2$]\label{theta}
Let $S^{2}\subset\mathbb{R}^{3}$ be the unit sphere.  
Fix an angle $\theta\in[0,2\pi)$, and for $y\in S^{2}$ let
$R_{y,\theta}$ denote the rotation of $\mathbb{R}^{3}$ by angle $\theta$
around the axis passing through $y$ (i.e.\ the oriented line $\mathbb{R}y$).
We define a binary operation on $S^{2}$ by
\[
    x * y := R_{y,\theta}(x).
\]
Then $(S^{2}, *)$ is a quandle, referred to as the
\emph{$\theta$--rotation quandle} and denoted by $S^{2}_{\theta}$.
\end{ex}

\begin{rem}
The operation of the $\theta$--rotation quandle $S^2_\theta$ can be written explicitly by the Rodrigues rotation formula. 
For $x,y \in S^2 \subset \mathbb{R}^3$, we have
\[
x * y 
= R_{y,\theta}(x)
= x\cos\theta + (y \times x)\sin\theta + (y \cdot x)(1-\cos\theta)\,y,
\]
where $\cdot$ and $\times$ denote the standard inner and cross product in $\mathbb{R}^3$, respectively.
\end{rem}

Since Grassmann manifolds, including their oriented versions, are symmetric spaces, it follows from the above Proposition \ref{prop:symmetricsp_quandle} that they admit natural structures of quandles.

\begin{ex}[Grassmann quandle]\label{Grassmannquandle}
   Let $Gr(n,k)$ be the Grassmann manifold consisting of all $k$-dimensional subspaces 
in $\mathbb{R}^n$.  
For $V,W\in Gr(n,k)$, we define a binary operation by
\[
    V * W := \text{the reflection of $V$ with respect to $W$} \qquad (V, W\in Gr(n,k)).
\]
Then $(Gr(n,k), *)$ is a quandle, called the \emph{Grassmann quandle}.
\end{ex}

\begin{ex}[Oriented Grassmann quandle]\label{orientedGrassmannquandle}
Let $\widetilde{Gr}(n,k)$ denote the oriented Grassmann manifold, 
consisting of all oriented $k$-dimensional subspaces in $\mathbb{R}^n$.  
For $V,W\in\widetilde{Gr}(n,k)$, we define a binary operation by
\[
    V * W := \text{the reflection of $V$  with respect to $W$}\qquad (V, W\in \widetilde{Gr}(n,k)),
\]
where the resulting $k$-plane inherits the orientation determined by $V$.
Then $(\widetilde{Gr}(n,k), *)$ is a quandle, called the 
\emph{oriented Grassmann quandle}.
\end{ex}
For details on the quandle structure on Grassmann, oriented Grassmann, and spherical quandles, see \cite{FurukiTamaru}. 
In this paper, we describe these examples in terms of quandle triplets. 
More precisely, we give detailed descriptions of the quandle structures in 
Examples~\ref{Grassmannquandle} and~\ref{orientedGrassmannquandle} 
in Subsections~\ref{unoriented Grassmann quandle} 
and~\ref{oriented grassmann}, respectively.

\subsection{Embedding of quandles}\label{sec.2.2}
\begin{definition}[Quandle homomorphism]
Let $X = (X, *_X)$ and $X' = (X', *_{X'})$ be quandles. 
A map $f: X \to X'$ is called a \emph{quandle homomorphism} if 
\[
    f(x *_X y) = f(x) *_{X'} f(y) \quad (x, y \in X).
\]
If $f$ is bijective, it is called a \emph{quandle isomorphism}, and we say that $X$ and $X'$ are \emph{isomorphic as quandles}.
\end{definition}

% \begin{definition}
% Let $G$ be a group.
% An \emph{automorphism} of $G$ is a bijective group homomorphism
% $\varphi\colon G\to G$.
% The set of all automorphisms of $G$ forms a group under composition,
% called the \emph{automorphism group} of $G$, and is denoted by
% \[
% \mathrm{Aut}(G).
% \]
% \end{definition}
\begin{definition}
Let $(X,*)$ be a quandle.
A \emph{quandle automorphism} of $X$ is a bijection
$f\colon X\to X$ satisfying
\[
f(x*y)=f(x)*f(y)
\qquad (x,y\in X).
\]
The set of all quandle automorphisms of $X$ forms a group under
composition, called the \emph{automorphism group} of the quandle $X$,
and is denoted by
\[
\mathrm{Aut}(X, *).
\]
\end{definition}

\begin{definition}[Embeddable quandle]
Let $X$ be a quandle. 
We say that $X$ is \emph{embeddable} if there exists a group $G$ and an injective quandle homomorphism 
\[
    \iota: X \rightarrow \mathrm{Conj}(G).
\]
Such $\iota$ is called a {\it quandle embedding}.
\end{definition}

\begin{rem}
In the literature, a quandle that admit an embedding into a conjugation quandle
of a group is sometimes described using different terminology.
For example, such quandles are also referred to as
\emph{admissible} \cite{KamadaMatsumoto}, \emph{injective} \cite{GranaHeckenbergerVendramin}, or \emph{irreducible} \cite{Inoue}, 
depending on the context and the authors.
In this paper, we consistently use the term \emph{embeddable}.
\end{rem}

\begin{theorem}[Eisermann~\cite{Eisermann}, Yonemura~\cite{Yonemura}]
\label{yonemura}
For any $n \in \mathbb{Z}_{>0}$, let $G_n$ be the Lie group defined by
\[
    G_n =
    \begin{cases}
        O(2), & n = 1,\\
        Spin(n+1), & n \in 2\mathbb{N},\\
        Pin^+(n+1), & n \in 2\mathbb{N}+1,\; n \ge 3,
    \end{cases}
\]
where $Spin(n+1)$ and $Pin^+(n+1)$ denote the non-trivial double coverings
of $SO(n+1)$ and $O(n+1)$, respectively.
Then exists an embedding of the spherical quandle
\[
    \iota_n \colon S^n \to \Conj(G_n).
\]
\end{theorem}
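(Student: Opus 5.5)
The approach is to realize the symmetry $s_y$ of the spherical quandle, $s_y(x)=2\langle x,y\rangle y-x$, through the reflection $r_y(x)=x-2\langle x,y\rangle y$ in the hyperplane $y^\perp$. Since $s_y=-r_y$, we have $s_y\in O(n+1)$ with $\det s_y=(-1)^{n}$. Hence $s_y\in SO(n+1)$ exactly when $n$ is even. In the Clifford algebra $Cl(n+1)$, with convention $v^2=+1$ (matching $Pin^+$), a unit vector $y$ lies in $Pin^+(n+1)$ and acts on $\mathbb{R}^{n+1}$ by the twisted adjoint action as $r_y$. The volume element $\omega=e_1\cdots e_{n+1}$ acts as $-\mathrm{id}$ when $n+1$ is odd. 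These two facts suggest the candidate map for each parity of $n$.

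\textbf{Case $n$ even.} Here $n+1$ is odd, so $\omega$ is central in $Cl(n+1)$. Define $\iota_n(y)=y\omega$. This element is even, so it lies in $Spin(n+1)$, and it covers $s_y$. I will check the homomorphism property directly:
\[
\iota(y)^{-1}\iota(x)\iota(y)=\omega^{-1}y^{-1}\,x\omega\,y\omega = y^{-1}xy\,\omega .
\]
By the Clifford relation, $y^{-1}xy=yxy=2\langle x,y\rangle y-x=x*y$. So the expression equals $\iota(x*y)$. Injectivity holds because $y\mapsto y\omega$ is the restriction of an injective linear map (right multiplication by the invertible $\omega$) applied to distinct vectors.

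\textbf{Case $n$ odd, $n\ge3$.} Try $\iota_n(y)=y\in Pin^+(n+1)$. The computation $y^{-1}xy=yxy=2\langle x,y\rangle y-x$ holds verbatim, because the sign convention $y^2=1$ gives $y^{-1}=y$. Injectivity is immediate. So the same formula, with no volume factor, works in $Pin^+(n+1)$. I will double-check that with $v^2=+1$ one indeed gets $yxy=2\langle x,y\rangle y-x$ rather than its negative. Since $xy+yx=2\langle x,y\rangle$, we have $yxy=y(2\langle x,y\rangle-yx)=2\langle x,y\rangle y-x$, which confirms it. The same identity also shows the even case could alternatively use $\iota(y)=y$ inside $Pin^+$. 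Multiplying by $\omega$ is needed only to land in the connected group $Spin$ as stated.

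\textbf{Case $n=1$.} Use $O(2)$ directly, with $\iota_1(y)=$ the reflection of $\mathbb{R}^2$ across the line $\mathbb{R}y$. For $n=1$ we have $\det s_y=-1$, and $s_y$ is exactly this reflection. Conjugation gives $r^{-1}_{\,y}\,s_x\,r_y=s_{r_y(x)}=s_{x*y}$, using $r_y(x)=s_y(x)$. Injectivity requires some care: the reflections across $\mathbb{R}y$ and $\mathbb{R}(-y)$ coincide. This is the main obstacle, namely that any map factoring through $O(n+1)$ identifies $\pm y$. For $n=1$, I would therefore instead use $\iota_1(y)=$ the rotation part encoding $y$ itself, that is, identify $S^1$ with the reflections $\rho_y$ across the line at half-angle. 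Concretely, send $y=e^{i\varphi}$ to the reflection whose axis has angle $\varphi/2$, up to a fixed rotation adjustment. Then I will verify the relation $\rho_y\rho_x\rho_y=\rho_{x*y}$ by angle bookkeeping: $x*y$ has angle $2\psi-\varphi$, where $\psi$ is the angle of $y$ and $\varphi$ that of $x$. Reflections with axis angles $a,b$ satisfy $\rho_b\rho_a\rho_b=\rho_{2b-a}$, so the half-angles transform correctly, up to the ambiguity mod $\pi$. That ambiguity is exactly the injectivity issue to resolve. The clean fix is the Clifford-style choice above, realized inside $O(2)\cong Pin$-type double cover bookkeeping. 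I expect this low-dimensional case to be the fiddly step.
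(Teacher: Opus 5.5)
The only gap is $n=1$, which you leave unresolved because of an obstruction that is not there; the cases $n$ even ($y\mapsto y\omega\in Spin(n+1)$) and $n\ge 3$ odd ($y\mapsto y\in Pin^+(n+1)$) are correct as written. For $n=1$, your half-angle map already works. The reflection $\rho_a$ of $\mathbb{R}^2$ across the line at angle $a$ depends only on $a \bmod \pi$, and $a\mapsto\rho_a$ is a bijection from $\mathbb{R}/\pi\mathbb{Z}$ onto $O(2)\setminus SO(2)$. Since $\varphi \bmod 2\pi$ and $\varphi/2 \bmod \pi$ determine each other, $e^{i\varphi}\mapsto\rho_{\varphi/2}$ is well defined and injective. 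Your identity $\rho_b\rho_a\rho_b=\rho_{2b-a}$ then gives $\rho_{\psi/2}^{-1}\rho_{\varphi/2}\rho_{\psi/2}=\rho_{(2\psi-\varphi)/2}$, which is the image of $x*y$. The ``mod $\pi$ ambiguity'' is simply how reflections are parametrized; it is not an injectivity defect.

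More simply, $n=1$ is literally your odd case. With $e_i^2=+1$, sending $e_1\mapsto\mathrm{diag}(1,-1)$ and $e_2\mapsto\bigl(\begin{smallmatrix}0&1\\1&0\end{smallmatrix}\bigr)$ gives $Cl(2)\cong M_2(\mathbb{R})$. Under this, $\cos\varphi\,e_1+\sin\varphi\,e_2$ is exactly $\rho_{\varphi/2}$, and $Pin^+(2)=O(2)$. So $O(2)$ here plays the role of the double cover, and the identification of $\pm y$ you feared never occurs. Incidentally, in your first attempt $r_y=-s_y$, not $s_y$; your conclusion there survives only because $s_{-z}=s_z$.

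On the route: the paper does not prove this theorem. It quotes it from Eisermann and Yonemura, and only recovers it afterwards as the $k=1$ instance of its oriented Grassmann embeddings (see the remark identifying $\widetilde{Gr}(n,1)$ with $S^{n-1}$, and Case 2 of the $S^2_\theta$ proposition). Concretely, it writes $S^n\cong\widetilde{Gr}(n+1,1)\cong Q(G,C_G(\widetilde h),\mathrm{Ad}(\widetilde h))$ with $G=Spin(n+1)$ or $Pin(n+1)$, and Theorem~\ref{Main} then gives $\widetilde g\mapsto\widetilde g^{-1}\widetilde h\widetilde g$. Take $\widetilde h=e_2\cdots e_{n+1}=e_1\omega$ and $\widetilde g\in Spin(n+1)$, which commutes with $\omega$ in both parities. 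The paper's map is then $y\mapsto y\omega$ with $y=\widetilde g^{-1}e_1\widetilde g$. That is your even-case formula, and a harmless variant of your odd-case one.

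The paper's route buys uniformity: one criterion ($\sigma$ inner with $\Fix(\sigma)=H$) handles Grassmannians, rotation quandles and cores. But it needs the identification of centralizers with stabilizers in $Spin$ and $Pin$, via connectedness and covering arguments and the bijection $\Psi$. Your direct Clifford computation is self-contained and gets injectivity for free from $\mathbb{R}^{n+1}\subset Cl(n+1)$. It also shows that the choice of $Pin^+$ is inessential: with $v^2=-1$ one has $y^{-1}=-y$ and still $y^{-1}xy=2\langle x,y\rangle y-x$.
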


% Here $Spin(n)$ and $Pin(n)$ denote the spin and pin groups,
% defined as subgroups of the Clifford algebra $\mr{Cl}_n$.
% The spin group $Spin(n)$ is generated by products of an even number
% of unit vectors in $\mathbb{R}^n$ and provides a double covering of
% the special orthogonal group $SO(n)$,
% while the pin group $Pin(n)$ is generated by products of an arbitrary
% number of unit vectors and provides a double covering of the full
% orthogonal group $O(n)$.
There are two non-isomorphic pin groups, denoted by
$Pin^+(n)$ and $Pin^-(n)$, corresponding to the two possible
Clifford algebra conventions.
In this paper, we simply write $Pin(n)$ when the distinction
between $Pin^+(n)$ and $Pin^-(n)$ is not essential.
In the present setting, the relevant pin group is $Pin^+(n)$.

\begin{theorem}[Bergman~\cite{Bergman}]\label{berg}
Let $G$ be a group, and consider its core quandle $\mathrm{Core}(G)$.
Let $\mathbb Z^{\times}$ be the multiplicative group of order $2$, and define the switching map 
\[
    \mr{Sw} : \mathbb Z^{\times} \to \mathrm{Aut}(G \times G)
\] 
by
\begin{align*}
    1 &\longmapsto \big[(g, h) \mapsto (g, h)\big],\\
    -1 &\longmapsto \big[(g, h) \mapsto (h, g)\big].
\end{align*}
Consider the external semidirect product group
\[
    \widetilde{G} := (G \times G) \rtimes_{\mr{Sw}} \mathbb Z^{\times},
\]
and let $\mathrm{Conj}(\widetilde{G})$ denote the conjugation quandle of $\widetilde{G}$. The group operation on $\widetilde G=(G\times G)\rtimes_{\mr{Sw}} \mathbb Z^{\times}$
is given by
\[
(g_1,h_1,a)\cdot (g_2,h_2,b)
=
\bigl({\mr{Sw}}(b)(g_1,h_1)\cdot (g_2,h_2),\, ab\bigr),
\]
that is,
\[
(g_1,h_1,a)\cdot (g_2,h_2,b)
=
\begin{cases}
(g_1g_2,\; h_1h_2,\; ab) & (b=1),\\[4pt]
(h_1g_2,\; g_1h_2,\; ab) & (b=-1).
\end{cases}
\]

Define a map 
\[
    f_{B} : \mathrm{Core}(G) \to \mathrm{Conj}(\widetilde{G})
\] 
by
\[
    f_{B}(g) := (g, g^{-1}, -1).
\]
Then $f_{B}$ is a quandle embedding.
\end{theorem}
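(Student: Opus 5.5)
The proof is a direct computation in $\widetilde G$. We verify two properties of $f_B$ separately: that it is injective, and that it is a quandle homomorphism, i.e.\ $f_B(g*h)=f_B(h)^{-1}f_B(g)f_B(h)$ for all $g,h\in G$.

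Injectivity is immediate. The first coordinate of $f_B(g)=(g,g^{-1},-1)$ is $g$ itself, so $f_B(g)=f_B(g')$ forces $g=g'$.

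For the homomorphism property, we first compute inverses in $\widetilde G$. By the multiplication rule with $b=-1$,
\[
(g,h,-1)(x,y,-1)=(hx,\;gy,\;1).
\]
This equals the identity $(e,e,1)$ exactly when $x=h^{-1}$ and $y=g^{-1}$. Hence
\[
(g,h,-1)^{-1}=(h^{-1},g^{-1},-1),
\]
and in particular $f_B(h)^{-1}=(h,h^{-1},-1)=f_B(h)$, so each $f_B(h)$ is an involution.

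Next we compute the conjugate in two steps. The first product is
\[
f_B(h)^{-1}f_B(g)=(h,h^{-1},-1)(g,g^{-1},-1)=(h^{-1}g,\;hg^{-1},\;1).
\]
Multiplying on the right by $f_B(h)=(h,h^{-1},-1)$, the case $b=-1$ of the rule swaps the left entries before multiplying. This gives
\[
(hg^{-1}h,\;h^{-1}gh^{-1},\;-1).
\]
Since $(hg^{-1}h)^{-1}=h^{-1}gh^{-1}$, this element is exactly $f_B(hg^{-1}h)=f_B(g*h)$, as required.

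There is no real obstacle here. The only point needing care is the convention of the semidirect product: the swap $\mathrm{Sw}(b)$ acts on the left factor according to the right factor's sign. One should also double-check that the multiplication rule as stated is associative (it is a right-action semidirect product), so that the inverse computed above is two-sided.
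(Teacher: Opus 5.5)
Your proof is correct, but it takes a different route from the paper. The inverse formula $(g,h,-1)^{-1}=(h^{-1},g^{-1},-1)$ is in fact two-sided, since directly $(h^{-1},g^{-1},-1)(g,h,-1)=(e,e,1)$. Your two products and the match with $f_B(hg^{-1}h)$ are right. Associativity of the stated multiplication is automatic because $\mathbb Z^{\times}$ is abelian, so the right-action convention causes no trouble.

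The paper states this theorem as a citation and never checks it by hand. Instead, in Subsection~\ref{4.1} it recovers $f_B$ as an instance of its general construction:
\begin{itemize}
\item It identifies $\Core(G)$ with the homogeneous quandle $Q(\widetilde G,\Delta\widetilde G,\widetilde{\mr{Sw}})$ via $g\mapsto \Delta\widetilde G(g,e,1)$, where $\widetilde{\mr{Sw}}(g,h,a)=(h,g,a)$.
\item It notes that $\Fix(\widetilde{\mr{Sw}})=\Delta\widetilde G$ and invokes Theorems~\ref{result} and~\ref{Main}.
\item Left implicit there is that $\widetilde{\mr{Sw}}$ is inner, namely conjugation by $q=(e,e,-1)$.
\item The resulting embedding is therefore $\Delta\widetilde G\,\widetilde g\mapsto \widetilde g^{-1}q\,\widetilde g$, and indeed $(g,e,1)^{-1}(e,e,-1)(g,e,1)=(g,g^{-1},-1)$.
\end{itemize}
In that route the homomorphism property comes from the general Lemma~\ref{lem:iota-hom}. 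Injectivity comes from the fixed-point condition $\Fix(\widetilde{\mr{Sw}})=\Delta\widetilde G$, not from reading off the first coordinate.

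Your argument is shorter and fully self-contained. The paper's argument explains where the formula comes from: the image of $f_B$ is exactly the conjugacy class of the involution $(e,e,-1)$. It also puts Bergman's embedding in the same scheme that handles the Grassmann and rotation quandles. The cost is that one must check the isomorphism $\Core(G)\cong Q(\widetilde G,\Delta\widetilde G,\widetilde{\mr{Sw}})$, the fixed-point set, and the innerness of $\widetilde{\mr{Sw}}$. The paper asserts the last of these, and the agreement with $f_B$, without writing them out.
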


\subsection{Homogeneous quandles and quandle triplet}\label{sec.2.3}
\begin{definition}[Homogeneous quandle]
Let $(X, *)$ be a quandle. 
If the automorphism group $\mathrm{Aut}(X, *)$ acts transitively on $X$, 
then $X = (X, *)$ is called a \emph{homogeneous quandle}.
\end{definition}

\begin{definition}[Quandle triplet]\label{triplet}
Let $G$ be a group and $H$ a subgroup of $G$. 
Let $\sigma \in \mathrm{Aut}(G)$ satisfy $H \subset \mathrm{Fix}(\sigma, G)$. 
Then the triple $(G, H, \sigma)$ is called a \emph{quandle triplet}.

Furthermore, the quandle structure on the coset $H \backslash G$ can be  defined by
\[
    Hg * Hh := H\sigma(gh^{-1}) h,
\]
and the resulting quandle is denoted by $Q(G, H, \sigma)$.
\end{definition}
\begin{theorem}[Joyce {\cite[Theorem~7.1]{Joyce}}]
    Every homogeneous quandle is isomorphic to $Q(G, H, \sigma)$ for some quandle triplet $(G, H, \sigma).$
\end{theorem}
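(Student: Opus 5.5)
The plan is to realize a homogeneous quandle $X$ as a coset space of its own automorphism group. Take $G=\mathrm{Aut}(X,*)$, fix a base point $x_0\in X$, let $H$ be its stabilizer, and take $\sigma$ to be conjugation by the point symmetry $S_{x_0}$. To match the right-coset convention of Definition~\ref{triplet}, I will let $G$ act on $X$ from the right. I write $x^g$ for the image of $x$ under $g$, and compose so that $(x^g)^{k}=x^{gk}$. Each $S_y$ is a bijection by axiom (2), and it preserves $*$ by axiom (3). Hence $S_y\in G$ for every $y\in X$.

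\emph{Step 1 (the triplet).} Put $H=\{k\in G\mid x_0^k=x_0\}$ and $\sigma(k):=S_{x_0}^{-1}\,k\,S_{x_0}$. This is an inner automorphism of $G$, so $\sigma\in\mathrm{Aut}(G)$.

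\emph{Step 2 (equivariance of symmetries).} For $h\in G$ and $x,y\in X$, the fact that $h$ is a quandle automorphism gives $(y*x)^h=y^h*x^h$. In right-action notation this reads
\[
S_{x^h}=h^{-1}S_xh .
\]
Applying this with $x=x_0$ and $h=k\in H$ gives $k^{-1}S_{x_0}k=S_{x_0}$. So $S_{x_0}$ commutes with $H$, which means $H\subset\mathrm{Fix}(\sigma,G)$. Thus $(G,H,\sigma)$ is a quandle triplet.

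\emph{Step 3 (the bijection).} Define $\Phi\colon H\backslash G\to X$ by $\Phi(Hg)=x_0^{g}$.
\begin{itemize}
\item It is well defined because $H$ fixes $x_0$.
\item It is injective, since $x_0^g=x_0^{g'}$ gives $g'g^{-1}\in H$, hence $Hg=Hg'$.
\item It is surjective exactly because $G$ acts transitively, i.e.\ $X$ is homogeneous.
\end{itemize}

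\emph{Step 4 (homomorphism).} By Step 2,
\[
\Phi(Hg)*\Phi(Hh)=(x_0^g)^{S_{x_0^h}}=x_0^{\,g h^{-1}S_{x_0}h}.
\]
On the other side,
\[
\Phi(Hg*Hh)=x_0^{\sigma(gh^{-1})h}=x_0^{\,S_{x_0}^{-1}gh^{-1}S_{x_0}h}.
\]
Axiom (1) gives $x_0*x_0=x_0$, so $S_{x_0}$ fixes $x_0$ and so does $S_{x_0}^{-1}$. Therefore the two expressions agree. Hence $\Phi$ is a quandle isomorphism $Q(G,H,\sigma)\cong X$; in particular the operation on $H\backslash G$ is a genuine quandle structure.

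The main obstacle is purely one of conventions. The order of composition, whether cosets are left or right, and whether $\sigma$ is conjugation by $S_{x_0}$ or by $S_{x_0}^{-1}$ must all fit together so that the formula $Hg*Hh=H\sigma(gh^{-1})h$ comes out exactly. I would fix the right-action convention first and check Step 2 carefully in it. Everything else is then a direct verification.
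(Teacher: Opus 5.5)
Your proof is correct and is essentially Joyce's original argument, which the paper cites without proving: $G=\mathrm{Aut}(X,*)$ acting on the right, $H$ the stabilizer of $x_0$, and $\sigma$ conjugation by $S_{x_0}$. The equivariance $S_{x^h}=h^{-1}S_xh$ together with $S_{x_0}$ fixing $x_0$ does all the work, and your convention checks (the right action means $G$ carries the opposite of the composition product, which is harmless) make $Hg*Hh=H\sigma(gh^{-1})h$ come out exactly.
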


\begin{rem}
The choice of a quandle triplet $(G,H,\sigma)$ associated with a given
homogeneous quandle is not unique for a given homogeneous quandle $X$.
\end{rem}

\section{Main statements and their proofs}\label{sec.3}
In this section, we give a sufficient condition for the existence of an embedding 
of a homogeneous quandle into a conjugation quandle of a group. 
In particular, such an embedding and the target conjugation quandle can be described explicitly.
\subsection{Main theorem}
Let $(G,H,\sigma)$ be a quandle triplet.
We consider the semidirect product group $G\rtimes_{\sigma}\mathbb Z$, 
where $\mathbb Z$ acts on $G$ via the automorphism $\sigma$.
The group multiplication on $G\rtimes_\sigma\mathbb Z$ is given by
\[
(g,m)\cdot(h,n)
=
\bigl(\sigma^{n}(g)\,h,\; m+n\bigr)
\qquad
(g,h\in G,\; m,n\in\mathbb Z).
\]
\begin{lemma}\label{lem:iota-hom}
Let $(G,H,\sigma)$ be a quandle triplet. Then the map 
\[
\iota \colon Q(G,H,\sigma) \to \mathrm{Conj}(G \rtimes_\sigma \mathbb{Z})\qquad
Hg \longmapsto (g,1)^{-1}(e,1)(g,1)
\]
is a quandle homomorphism.
\end{lemma}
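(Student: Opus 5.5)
First we check that $\iota$ is well defined. Then we compute $\iota(Hg)$ explicitly, and finally we verify the homomorphism identity by a direct computation in $G\rtimes_\sigma\mathbb Z$.

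\textbf{Well-definedness and explicit formula.} Using the multiplication rule $(g,m)(h,n)=(\sigma^n(g)h,m+n)$, the inverse is $(g,1)^{-1}=(\sigma^{-1}(g^{-1}),-1)$. Indeed, $(\sigma^{-1}(g^{-1}),-1)(g,1)=(\sigma(\sigma^{-1}(g^{-1}))g,0)=(e,0)$. Then
\[
(g,1)^{-1}(e,1)(g,1)=(\sigma^{-1}(g^{-1}),0)(g,1)=(g^{-1}\sigma(g),1),
\]
since $(\sigma^{-1}(g^{-1}),-1)(e,1)=(\sigma(\sigma^{-1}(g^{-1})),0)=(g^{-1},0)$, and then $(g^{-1},0)(g,1)=(\sigma(g^{-1})g,1)$.

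This last step requires care with the ordering conventions. I would recompute it: $(g^{-1},0)(g,1)=(\sigma^1(g^{-1})g,1)=(\sigma(g)^{-1}g,1)$. So the formula is $\iota(Hg)=(\sigma(g)^{-1}g,1)$. For well-definedness, replace $g$ by $kg$ with $k\in H$. Since $H\subset\mathrm{Fix}(\sigma,G)$, we get $\sigma(kg)^{-1}kg=\sigma(g)^{-1}k^{-1}kg=\sigma(g)^{-1}g$, so the value is independent of the representative. Equivalently, $(k,1)$ commutes with $(e,1)$ when $\sigma(k)=k$, which shows well-definedness directly from the conjugation form.

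\textbf{Homomorphism property.} I would avoid expanding everything into coordinates by working with the conjugation form. Write $t=(e,1)$ and $\hat g=(g,1)$, so that $\iota(Hg)=\hat g^{-1}t\hat g$. The target is
\[
\iota(Hh)^{-1}\iota(Hg)\iota(Hh)=(t^{-1}\hat h)^{-1}\,\hat g^{-1}t\hat g\,(t^{-1}\hat h)
\]
after cancelling $\hat h^{-1}\hat h$ pairs. More precisely, this equals $\hat h^{-1}t^{-1}\hat h\,\hat g^{-1}t\hat g\,\hat h^{-1}t\hat h$, which is conjugation of $t$ by $w=\hat g\hat h^{-1}t\hat h$. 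On the other side, $\iota(Hg*Hh)=\iota(H\sigma(gh^{-1})h)$ is conjugation of $t$ by $\widehat{\sigma(gh^{-1})h}$. Since $t$ commutes only with elements whose $G$-part is $\sigma$-fixed, it suffices to show that the $G$-components of $w$ and $\widehat{\sigma(gh^{-1})h}$ agree up to left multiplication by a $\sigma$-fixed element. Here $w$ has $\mathbb Z$-component $1$, since it is $1-1+1$.

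I would compute the $G$-component of $w$ directly. We have $\hat h^{-1}t\hat h=(h^{-1}\sigma(h),1)$, as computed by the same method as the formula above, read with care. Then $w=(g,1)(\sigma(h)^{-1}h\,\text{-type term},1)$ has the wrong $\mathbb Z$-degree, so instead I would group the product as $\hat g\hat h^{-1}\cdot t\hat h$. Here $\hat g\hat h^{-1}=(\sigma^{-1}(g h^{-1}),0)$ and $t\hat h=(\sigma(e)h,2)$. This has degree $2$, which tells me the grouping must be reconciled. Conjugation of $t$ by $x$ and by $t^kx$ coincide, so the $\mathbb Z$-degree can be adjusted freely. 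Multiplying by $t^{-1}$ on the left gives $(\sigma(gh^{-1})h,1)$ after applying the multiplication rule, which is exactly $\widehat{\sigma(gh^{-1})h}$.

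\textbf{Main obstacle.} The main obstacle is purely bookkeeping: keeping the semidirect product convention consistent, including where $\sigma$ versus $\sigma^{-1}$ appears and the degree shifts. I expect that once the identity $\hat g\hat h^{-1}=(\sigma^{-1}(gh^{-1}),0)$ and the fact that $t$ is central modulo degree are set up, the verification closes in one line.
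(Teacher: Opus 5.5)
Your proof is correct, but it verifies the homomorphism identity by a different route from the paper.

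\textbf{The paper's route.} The paper computes $\iota(Hg)=(\sigma(g^{-1})g,1)$, exactly as you do. It then asserts that expanding $(\sigma(h^{-1})h,1)^{-1}(\sigma(g^{-1})g,1)(\sigma(h^{-1})h,1)$ and $\iota(H\sigma(gh^{-1})h)$ in coordinates gives the same element. Both equal $(\sigma(h^{-1})\sigma^{2}(hg^{-1})\sigma(gh^{-1})h,1)$. The paper never checks well-definedness.

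\textbf{Your route.} You stay in conjugation form. You have $\iota(Hg)*\iota(Hh)=w^{-1}tw$ with $w=\hat g\hat h^{-1}t\hat h$. A single product, $w=(\sigma^{-1}(gh^{-1}),0)(h,2)=(\sigma(gh^{-1})h,2)=t\,\widehat{\sigma(gh^{-1})h}$, finishes the proof, since $t$ commutes with itself. You also identify the centralizer of $t$ as $\{(a,m)\mid \sigma(a)=a\}$, which gives well-definedness.

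This makes the mechanism transparent. Identify $G$ with $G\times\{0\}$. Then $\sigma(x)=t^{-1}xt$ and $\hat g=tg$, so $\iota(Hg)=g^{-1}tg$, and $gh^{-1}th=t\,\sigma(gh^{-1})h$.

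\textbf{Comparison.} The paper's route is purely mechanical and requires expanding a triple product. Yours needs one two-factor multiplication and supplies the well-definedness check the paper omits.

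\textbf{Cleanup for the write-up.} Remove the false intermediate statements that your draft later corrects:
\begin{itemize}
\item The first display, with middle term $(\sigma^{-1}(g^{-1}),0)$ and value $(g^{-1}\sigma(g),1)$.
\item The expression $(t^{-1}\hat h)^{-1}\hat g^{-1}t\hat g\,(t^{-1}\hat h)$. It equals $(\sigma(h^{-1})\sigma(g^{-1})gh,1)$, which is not the target.
\item The claim that $w$ has $\mathbb Z$-component $1$. It is $2$, and that is exactly why the factor $t$ appears.
\item The formula $\hat h^{-1}t\hat h=(h^{-1}\sigma(h),1)$, which should be $(\sigma(h)^{-1}h,1)$.
\end{itemize}
In the well-definedness remark, the relevant factor is $(k,0)$, since $\widehat{kg}=(k,0)\hat g$ for $k\in H$. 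Replace ``$t$ is central modulo degree'' with the fact you actually use: left-multiplying the conjugating element by an element of the centralizer of $t$ (here $t$ itself) does not change the conjugate.
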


\begin{proof}
Recall that the multiplication in $G \rtimes_\sigma \mathbb{Z}$ is given by
\[
(g,m)(h,n) = (g \,\sigma^m(h), m+n),
\]
and
\[
(g,m)^{-1} = (\sigma^{-m}(g^{-1}), -m).
\]

We first compute
\[
(g,1)^{-1}(e,1)(g,1).
\]
Since
\[
(g,1)^{-1} = (\sigma^{-1}(g^{-1}), -1),
\]
we obtain
\[
(g,1)^{-1}(e,1)
= (\sigma^{-1}(g^{-1}), -1)(e,1)
= (g^{-1},0),
\]
and hence
\[
(g,1)^{-1}(e,1)(g,1)
= (g^{-1},0)(g,1)
= (\sigma(g^{-1})g,1).
\]
Next, we verify that $\iota$ preserves the quandle operation.
For $Hg, Hh \in Q(G,H,\sigma)$, we have
\[
\iota(Hg * Hh)
= \iota(H\sigma(gh^{-1})h),
\]
and
\[
\iota(Hg) * \iota(Hh)
= (\sigma(g^{-1})g,1) * (\sigma(h^{-1})h,1)=(\sigma(h^{-1})h,1)^{-1}(\sigma(g^{-1})g,1)(\sigma(h^{-1})h,1).
\]
%=(\sigma(h^{-1})\sigma^2(hg^{-1})\sigma(gh^{-1})h, 1). 
A direct computation shows that these two expressions coincide.
Therefore, $\iota$ is a quandle homomorphism.
\end{proof}

\begin{theorem}\label{result}
Let $\iota \colon Q(G,H,\sigma) \to \mathrm{Conj}(G \rtimes_\sigma \mathbb{Z})$ be the map in Lemma~\ref{lem:iota-hom}. 
If $\mathrm{Fix}(\sigma)=H$, then $\iota$ is injective.
\end{theorem}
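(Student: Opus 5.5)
The plan is to reduce injectivity to a fixed-point condition in $G$, using the explicit formula for $\iota$ from the proof of Lemma~\ref{lem:iota-hom}. There it is computed that
\[
\iota(Hg)=(g,1)^{-1}(e,1)(g,1)=(\sigma(g^{-1})g,\,1).
\]
So $\iota(Hg)=\iota(Hk)$ holds exactly when the first coordinates agree, i.e.\ when $\sigma(g^{-1})g=\sigma(k^{-1})k$ in $G$.

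First I will record that this formula depends only on the coset $Hg$, so that $\iota$ is well defined. For $h\in H\subset\mathrm{Fix}(\sigma)$ we have $\sigma(h)=h$, hence
\[
\sigma((hg)^{-1})hg=\sigma(g^{-1})\sigma(h^{-1})hg=\sigma(g^{-1})g.
\]
This step uses only the inclusion $H\subset \mathrm{Fix}(\sigma)$ built into Definition~\ref{triplet}.

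For injectivity, suppose $\sigma(g^{-1})g=\sigma(k^{-1})k$. Multiplying on the left by $\sigma(k)$ and on the right by $g^{-1}$ gives
\[
\sigma(k)\sigma(g^{-1})=kg^{-1}, \qquad\text{that is,}\qquad \sigma(kg^{-1})=kg^{-1}.
\]
Thus $kg^{-1}\in\mathrm{Fix}(\sigma)$. The hypothesis $\mathrm{Fix}(\sigma)=H$ then gives $kg^{-1}\in H$, so $Hk=Hg$. Hence $\iota$ is injective. Here the reverse inclusion $\mathrm{Fix}(\sigma)\subset H$ is the only extra input, which explains why it is the hypothesis of Theorem~\ref{result}.

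The main point requiring care is not conceptual but a matter of conventions. The multiplication of $G\rtimes_\sigma\mathbb Z$ is stated before Lemma~\ref{lem:iota-hom} with $\sigma^n$ acting on the left factor, while the proof of the lemma uses $(g,m)(h,n)=(g\,\sigma^m(h),m+n)$. I will fix the convention used in the lemma's proof, so that the formula $\iota(Hg)=(\sigma(g^{-1})g,1)$ is the one just quoted. Under the other convention the first coordinate becomes an expression of the same shape with $\sigma$ replaced by $\sigma^{\pm1}$ in suitable places. The identical manipulation then shows that $kg^{-1}$ is fixed by $\sigma^{\pm1}$, and since $\mathrm{Fix}(\sigma^{-1})=\mathrm{Fix}(\sigma)$ the conclusion is unchanged.
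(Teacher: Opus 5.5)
Your proof is correct and follows the paper's argument: equate the first coordinates of $\iota(Hg)=(\sigma(g^{-1})g,1)$, obtain an element of $\mathrm{Fix}(\sigma)=H$, and conclude that the cosets agree. Your bookkeeping is cleaner than the paper's, which writes $\sigma(g_1^{-1}g_2)=g_1^{-1}g_2$ where the fixed element is really $g_2g_1^{-1}$ (your $kg^{-1}$), as right cosets require; your one slip is that $(\sigma(g^{-1})g,1)$ is the formula for the convention $(g,m)(h,n)=(\sigma^{n}(g)h,m+n)$, not for the one recalled in the lemma's proof (which gives $(\sigma^{-1}(g^{-1})g,1)$), but your observation $\mathrm{Fix}(\sigma^{-1})=\mathrm{Fix}(\sigma)$ already covers this.
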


\begin{proof}
Assume that $\mathrm{Fix}(\sigma)=H$, and suppose that 
$\iota(Hg_1)=\iota(Hg_2)$. 
Then
\[
\sigma(g_1^{-1})g_1=\sigma(g_2^{-1})g_2,
\]
which implies
\[
\sigma(g_1^{-1}g_2)=g_1^{-1}g_2.
\]
Hence $g_1^{-1}g_2 \in \mathrm{Fix}(\sigma)=H$, and so $Hg_1 = Hg_2$.
This proves that $\iota$ is injective, and hence a quandle embedding.
\end{proof}

\begin{rem}
When $H=\{1_G\}$, the quandle $Q(G,H,\sigma)$ coincides with
the generalized Alexander quandle $\mathrm{Alex}(G,\sigma)$.
In this case, the embedding given in Theorem~\ref{result}
recovers the embedding of generalized Alexander quandles
due to Dhanwani, Raundal and Singh~\cite{DhanwaniRaundalSingh}.
\end{rem}

\begin{lemma}\label{lem:conj-embed}
Let $G$ be a group, and assume that $\sigma \in \mathrm{Inn}(G)$.  
Thus there exists an element $q \in G$ such that 
\[
\sigma(g) = q^{-1} g q \qquad (g \in G),
\]
and we fix such $q$.
Then the map 
\[
\eta \colon \mathrm{Conj}(G) \longrightarrow \mathrm{Conj}(G \rtimes_\sigma \mathbb{Z}), 
\qquad 
g \longmapsto (q^{-1}g,1)
\]
is an embedding. In particular, $\mathrm{Conj}(G)$ is a subquandle of $\mathrm{Conj}(G \rtimes_\sigma \mathbb{Z})$.
\end{lemma}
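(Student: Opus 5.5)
The plan is to verify directly that $\eta$ is an injective quandle homomorphism, using the multiplication $(g,m)\cdot(h,n)=(\sigma^{n}(g)\,h,\;m+n)$ fixed at the beginning of this subsection. Injectivity is immediate: if $(q^{-1}g_1,1)=(q^{-1}g_2,1)$, then $q^{-1}g_1=q^{-1}g_2$, so $g_1=g_2$. Therefore the substance of the proof is the homomorphism property
\[
\eta(h)^{-1}\eta(g)\eta(h)=\eta(h^{-1}gh)\qquad(g,h\in G).
\]

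To organize the computation, I will put $a=q^{-1}h$ and $b=q^{-1}g$, and first record the inverse of an element with $\mathbb Z$-component $1$. Solving $(a,1)(x,-1)=(\sigma^{-1}(a)x,0)=(e,0)$ gives
\[
(a,1)^{-1}=(\sigma^{-1}(a^{-1}),-1).
\]
Then I compute the conjugate in two steps:
\[
(a,1)^{-1}(b,1)=\bigl(\sigma(\sigma^{-1}(a^{-1}))\,b,\,0\bigr)=(a^{-1}b,0),
\]
and
\[
(a^{-1}b,0)(a,1)=\bigl(\sigma(a^{-1}b)\,a,\,1\bigr).
\]

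The only point that uses the hypothesis $\sigma\in\mathrm{Inn}(G)$ is the substitution $\sigma(x)=q^{-1}xq$ in the last expression. This gives
\[
\sigma(a^{-1}b)\,a=q^{-1}(h^{-1}q)(q^{-1}g)q\,q^{-1}h=q^{-1}h^{-1}gh.
\]
So the conjugate equals $(q^{-1}h^{-1}gh,1)=\eta(h^{-1}gh)=\eta(g*h)$, which is the homomorphism property. Together with injectivity, $\eta$ is an embedding, and its image is a subquandle isomorphic to $\mathrm{Conj}(G)$.

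The main obstacle is bookkeeping rather than any new idea. The paper writes two different multiplication conventions: one in the setup, and $(g,m)(h,n)=(g\,\sigma^m(h),m+n)$ inside the proof of Lemma~\ref{lem:iota-hom}. I expect the shift by $q^{-1}$ in $\eta$ to be compatible only with the convention $(\sigma^n(g)h,m+n)$. With the other convention, the same calculation produces $\sigma^{-1}$ in place of $\sigma$, and the factors of $q$ no longer cancel. Under that convention one would instead use the map $g\mapsto(qg,1)$, or equivalently replace $q$ by $q^{-1}$. I will therefore state explicitly that the setup convention is being used, and carry out the inverse and conjugation computations above under that convention.
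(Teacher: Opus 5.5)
Your proposal is correct and follows the paper's proof: injectivity is immediate, and the paper's unstated "straightforward computation" is exactly your verification $\eta(h)^{-1}\eta(g)\eta(h)=\bigl(\sigma(h^{-1}g)\,q^{-1}h,1\bigr)=(q^{-1}h^{-1}gh,1)=\eta(h^{-1}gh)$, obtained from the semidirect-product multiplication and $\sigma(x)=q^{-1}xq$. Your remark on conventions is also right: the paper's actual computations, including those in the proof of Lemma~\ref{lem:iota-hom}, follow the setup convention $(\sigma^n(g)h,m+n)$, and under the other stated convention the lemma would need $q$ in place of $q^{-1}$.
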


\begin{proof}
For $g,h \in G$, we have
\[
\eta(g*h)=\eta(h^{-1}gh)=(q^{-1}h^{-1}gh,1).
\]
On the other hand,
\[
\eta(g)*\eta(h)
=(q^{-1}g,1)*(q^{-1}h,1)
=(q^{-1}h,1)^{-1}(q^{-1}g,1)(q^{-1}h,1).
\]
Using the multiplication in $G \rtimes_\sigma \mathbb{Z}$ and the fact that $\sigma(g)=q^{-1}gq$, 
a straightforward computation shows that
\[
\eta(g)*\eta(h)=(q^{-1}h^{-1}gh,1).
\]
Thus $\eta(g*h)=\eta(g)*\eta(h)$, and hence $\eta$ is a quandle homomorphism.

It is clear that $\eta$ is injective. Therefore, $\eta$ is an embedding.
\end{proof}

\begin{theorem}\label{Main}
Let $\iota \colon Q(G,H,\sigma) \to \mathrm{Conj}(G \rtimes_\sigma \mathbb{Z})$ be the map in Lemma~\ref{lem:iota-hom}. 
Assume that $\sigma \in \mathrm{Inn}(G)$. 
Then there exists an element $q \in G$ such that 
\[
\sigma(g) = q^{-1} g q \qquad (g \in G),
\]
and we fix such $q$.
Then the image of $\iota$ is contained in $\mathrm{Conj}(G)$, that is,
\[
\mathrm{Im}(\iota) \subset \mathrm{Conj}(G) \subset \mathrm{Conj}(G \rtimes_\sigma \mathbb{Z}).
\]
\[
\begin{tikzcd}
Q(G,H,\sigma) \arrow[r, "\iota"] \arrow[dr, "\iota'"'] 
& \mathrm{Conj}(G \rtimes_\sigma \mathbb{Z}) \\
& \mathrm{Conj}(G) \arrow[u, "\eta"']
\end{tikzcd}
\]
where $\iota'(Hg)=g^{-1}qg$.
\end{theorem}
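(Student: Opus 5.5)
The plan is to compare the formula for $\iota$ computed in the proof of Lemma~\ref{lem:iota-hom} directly with the embedding $\eta$ of Lemma~\ref{lem:conj-embed}, and to show the identity $\iota = \eta\circ\iota'$.

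\textbf{Step 1: a closed formula for $\iota$.} From the proof of Lemma~\ref{lem:iota-hom},
\[
\iota(Hg) = (\sigma(g^{-1})g,\,1) \qquad (g\in G).
\]
I will use this formula rather than redoing the computation in the semidirect product, so that the sign and ordering conventions agree with those already used there.

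\textbf{Step 2: rewrite using $\sigma(x)=q^{-1}xq$.} With this substitution, $\sigma(g^{-1})g = q^{-1}g^{-1}qg$. Hence
\[
\iota(Hg) = (q^{-1}\,(g^{-1}qg),\,1) = \eta(g^{-1}qg).
\]
Thus every value of $\iota$ lies in $\eta(\mathrm{Conj}(G))$. By Lemma~\ref{lem:conj-embed} we identify $\eta(\mathrm{Conj}(G))$ with the subquandle $\mathrm{Conj}(G)$, so $\mathrm{Im}(\iota)\subset \mathrm{Conj}(G)$ and the diagram commutes with $\iota'(Hg)=g^{-1}qg$.

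\textbf{Step 3: well-definedness and homomorphism property of $\iota'$.} The only point requiring care is that $\iota'$ must be well defined on cosets. Let $h\in H\subset \mathrm{Fix}(\sigma)$. Then $q^{-1}hq=h$, so $h$ commutes with $q$, and therefore
\[
(hg)^{-1}q(hg)=g^{-1}h^{-1}qhg=g^{-1}qg.
\]
Alternatively, well-definedness follows from $\eta\circ\iota'=\iota$ together with the injectivity of $\eta$, since $\iota$ is already well defined.

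It remains to see that $\iota'$ is a quandle homomorphism. We have
\[
\eta(\iota'(x*y))=\iota(x*y)=\iota(x)*\iota(y)=\eta(\iota'(x))*\eta(\iota'(y))=\eta(\iota'(x)*\iota'(y)),
\]
and injectivity of $\eta$ gives $\iota'(x*y)=\iota'(x)*\iota'(y)$. A direct check is also possible: both sides are conjugates of $q$.

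\textbf{Main obstacle and consequence.} I expect no real difficulty. The only delicate point is the bookkeeping of the semidirect-product conventions, which is handled by reusing the formula from Lemma~\ref{lem:iota-hom}. As a by-product, if $\mathrm{Fix}(\sigma)=H$, then Theorem~\ref{result} together with $\iota=\eta\circ\iota'$ shows that $\iota'$ is injective, and hence is an embedding into $\mathrm{Conj}(G)$.
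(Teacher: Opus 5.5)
Your proposal is correct and follows the paper's proof: both rewrite $\iota(Hg)=(\sigma(g^{-1})g,1)$ as $(q^{-1}g^{-1}qg,1)=\eta(g^{-1}qg)$ to conclude that $\mathrm{Im}(\iota)\subset\eta(\mathrm{Conj}(G))$. Your Step 3 is a correct supplement that the paper leaves implicit: it checks that $\iota'$ is well defined using $H\subset\mathrm{Fix}(\sigma)$, and that $\iota'$ is a homomorphism via the injectivity of $\eta$.
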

\begin{proof}
For $Hg \in Q(G,H,\sigma)$,
\[
\iota(Hg)=(\sigma(g^{-1})g,1)=(q^{-1}g^{-1}qg,1)=\eta(g^{-1}qg).
\]
Thus $\iota(Hg) \in \mathrm{Im}(\eta)$, 
which is a subquandle of $\mathrm{Conj}(G \rtimes_\sigma \mathbb{Z})$ 
isomorphic to $\mathrm{Conj}(G)$.
Hence $\mathrm{Im}(\iota) \subset \mathrm{Conj}(G)$.
\end{proof}

In the case where $\sigma \in \mathrm{Inn}(G)$, the embedding $\iota$ factors through $\mathrm{Conj}(G)$ as above. 
Therefore, in what follows we consider the embedding
\[
\iota' \colon Q(G,H,\sigma) \longrightarrow \mathrm{Conj}(G), 
\qquad 
Hg \longmapsto g^{-1} q g.
\]

In the next section, we apply this construction to homogeneous quandles. 
More precisely, for a given homogeneous quandle $X$, we choose an appropriate quandle triplet $(G,H,\sigma)$ and construct an embedding of $X$ into a conjugation quandle using the map $\iota'$.

\section{Examples}\label{sec.example}
In this section, we give several examples related to the main theorem.
First, we reinterpret Bergman’s embedding of core quandles
within the framework of the main theorem. We also construct embeddings of the unoriented Grassmann quandle,
the oriented Grassmann quandle,
and the $\theta$-rotation quandle.

\subsection{Bergman's embedding of the core quandle}\label{4.1}
%The following known embeddings can be explained in terms of Theorem \ref{result}.

% \begin{ex}[Embedding of the spherical quandle]

% \end{ex}
% The preceding example is the spherical quandle embedding established by Yonemura and Eisermann.
For a group \(G\), the core quandle \(\Core(G)\) can be realized as a quandle
associated with a quandle triplet via a certain semidirect product group as follows.
Let
\[
\widetilde{G} := (G \times G)\rtimes_{\mr{Sw}}\mathbb Z^{\times}.
\]
where $\mr{Sw}$ is the automorphism of $G\times G$ defined as in Theorem $\ref{berg}$. 
% where the action \(\varphi\colon \{\pm 1\} \to \mathrm{Aut}(G\times G)\) is given by
% \begin{align*}
% %\varphi : \{\pm 1\} &\to \mathrm{Aut}(G \times G)\\
%     1 &\mapsto \big[(g, h) \mapsto (g, h)\big],\\
%     -1 &\mapsto \big[(g, h) \mapsto (h, g)\big].
% \end{align*}
The group operation on \(\widetilde{G}\) is
\[
(g_1,h_1,a)\cdot(g_2, h_2,b)
 =\bigl({\mr{Sw}}^{\,\xi(b)}(g_1, h_1)\,(g_2, h_2),\; ab\bigr)
 %((g_1,h_1), (g_2, h_2)\in G\times G,\; a,b\in\{\pm1\}),
\]
where $\xi:\{\pm1\}\cong \mathbb{Z}/2\mathbb{Z}=\{0,1\}$ is the natural group isomorphism.
Then \(\Core(G)\) is isomorphic to the quandle \(Q(\widetilde{G}, \Delta\widetilde{G}, \widetilde{\mr{Sw}})\), where
\[
\Delta\widetilde{G} := \{(g,g,a)\mid g\in G,\ a\in\{\pm1\}\}
\]
and the automorphism \(\widetilde{\mr{Sw}}\colon \widetilde{G}\to\widetilde{G}\) is given by
\[
\widetilde{\mr{Sw}}(g,h,a) = (h,g,a).
\]
Hence, the map
\[
G
\longrightarrow
\Delta \widetilde{G}\backslash\widetilde{G},
\qquad
g \longmapsto \Delta\widetilde{G}(g, e, 1)
\]
gives an isomorphism of quandles
\[
\Core(G)\cong Q(\widetilde{G}, \Delta\widetilde{G}, \widetilde{\mr{Sw}}).
\]
In fact one can see that this map is a quandle homomorphism.
For $g,h\in G$, one has
\begin{align*}
\Delta\widetilde{G}(g,e,1) * \Delta\widetilde{G}(h,e,1)&=\Delta\widetilde{G}\widetilde{\mr{Sw}}((g, e, 1)(h, e, 1)^{-1})(h, e, 1)\\
&=\Delta\widetilde{G}\widetilde{\mr{Sw}}((g, e, 1)(h^{-1}, e, 1))(h, e, 1)\\
%&=\Delta\widetilde{G}(e, gh^{-1}, 1)(h, e, 1)\\
%&=\Delta\widetilde{G}(h, gh^{-1}, 1)\\
&=\Delta\widetilde{G}(hg^{-1}, hg^{-1}, 1)(h, gh^{-1}, 1)\\
&= \Delta\widetilde{G}(hg^{-1}h, e, 1),
\end{align*}
which agrees with the operation in $\Core(G)$ under the above map.
Since $\mr{Fix}(\widetilde{\mr{Sw}})=\Delta \widetilde{G}$, 
the above Theorem $\ref{Main}$ implies that the map
\[
\operatorname{Core}(G)\ \longrightarrow\ \operatorname{Conj}(\widetilde{G})
\]
is an embedding of quandle. This embedding coincides with Bergman's embedding
$f_B$ in Theorem~\ref{berg}.

\subsection{Embedding of the $\theta$--rotation quandle on the sphere $S^2$}\label{spherical rotation quandle}
In this section, we study an embedding of the $\theta$--rotation quandle
$S^2_\theta$ on the $2$--sphere, which appeared in Example~\ref{theta}.
Although the realization of the $2$--sphere as a symmetric space and the
associated rotation structure are well known from the viewpoint of
symmetric spaces (see, for example, Loos~\cite{Loos}), an explicit description in terms
of quandles seems to be missing in the literature.
As a first step, we describe $S^2_\theta$ in terms of a homogeneous
quandle.

\begin{lemma}\label{hodai.S^2_theta}
There exists a quandle isomorphism.
\[
S^2_\theta \;\cong\; Q(SO(3), SO(2), \sigma_\theta),
\]
where $\sigma_\theta:SO(3)\to SO(3)$ is the automorphism defined by
\[
\sigma_\theta=\mr{Ad}(h_\theta) ,
\quad
h_\theta=\begin{pmatrix}
    1&0\\
    0&R_\theta
\end{pmatrix},
\]
and $R_\theta\in SO(2)$ denotes the rotation matrix of angle $\theta$.
\end{lemma}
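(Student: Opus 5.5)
The plan is to write down an explicit map $\Phi\colon Q(SO(3),SO(2),\sigma_\theta)\to S^2_\theta$ and check directly that it is well defined, bijective, and compatible with the two operations. Conventions matter here: vectors are row vectors and matrices act on the right, $H=SO(2)$ is embedded as $\{\mathrm{diag}(1,A)\mid A\in SO(2)\}$, and $\sigma_\theta(g)=h_\theta^{-1} g h_\theta$. I will take $\mathrm{Ad}(h)(g)=h^{-1}gh$ to match Lemma~\ref{lem:conj-embed}; if the opposite convention is intended, replace $\theta$ by $-\theta$ below. Note that $h_\theta$ commutes with $H$, so $H\subset \mathrm{Fix}(\sigma_\theta)$, and $(SO(3),SO(2),\sigma_\theta)$ is indeed a quandle triplet.

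I propose the map
\[
\Phi(Hg) := e_1 g \qquad (e_1=(1,0,0)).
\]
\emph{Well-definedness.} Since $e_1\,\mathrm{diag}(1,A)=e_1$, we get $e_1(kg)=e_1 g$ for all $k\in H$.

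\emph{Bijectivity.} $SO(3)$ acts transitively on $S^2$ from the right, which gives surjectivity. The stabilizer of $e_1$ consists of the orthogonal matrices whose first row is $e_1$, namely exactly $H$; this gives injectivity.

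\emph{The key observation.} $h_\theta$ is the rotation by angle $\theta$ about the axis $e_1$, that is, $R_{e_1,\theta}$ up to the sign of the angle fixed by the conventions. Moreover, for $y=e_1 h$ we have
\[
R_{y,\theta}=h^{-1}R_{e_1,\theta}\,h
\]
in the right-action convention. To verify this, note that $h^{-1}h_\theta h$ is a rotation fixing $e_1 h=y$, and its angle is $\theta$ with the orientation transported by $h\in SO(3)$.

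\emph{Homomorphism property.} Let $x=e_1 g$ and $y=e_1 h$. Then
\[
\Phi(Hg*Hh)=e_1\,\sigma_\theta(gh^{-1})\,h
= e_1 h_\theta^{-1} g h^{-1} h_\theta h
= e_1 g h^{-1} h_\theta h ,
\]
using $e_1h_\theta^{-1}=e_1$. On the other hand,
\[
x*y = x\,R_{y,\theta} = e_1 g\, h^{-1} h_\theta h ,
\]
so the two agree.

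The only delicate step is the orientation and sign bookkeeping in the key observation: whether $h_\theta$ acting on row vectors from the right is rotation by $+\theta$ or $-\theta$ about $e_1$, and whether conjugation by $h$ preserves the sense of rotation about the oriented axis $\mathbb{R}y$. I would settle this by checking the Rodrigues formula at $y=e_1$, where $e_2h_\theta=(0,\cos\theta,\sin\theta)$ should equal $e_2\cos\theta+(e_1\times e_2)\sin\theta=e_2\cos\theta+e_3\sin\theta$, which it does. For general $y$, the identity $(ug)\times(vg)=(u\times v)g$ for $g\in SO(3)$ shows that the Rodrigues formula is equivariant, so the conjugation identity holds with the same sign. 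If the paper's $\mathrm{Ad}$ convention differs, the isomorphism still holds with $\theta$ replaced by $-\theta$, equivalently after composing $\Phi$ with a reflection.
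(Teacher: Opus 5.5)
Your proof is correct and follows the same route as the paper: the identification $SO(2)g\mapsto e_1g$, followed by the computation $e_1\sigma_\theta(gh^{-1})h=e_1gh^{-1}h_\theta h$. You also justify the steps the paper leaves implicit. These are bijectivity via the stabilizer of $e_1$, and the identity $R_{e_1h,\theta}=h^{-1}h_\theta h$ (which the paper writes loosely as $R_{h_\theta\cdot e_1,\theta}$), including the orientation bookkeeping via the $SO(3)$-equivariance of the Rodrigues formula.
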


\begin{proof}
Identify the $2$--sphere $S^2$ with the homogeneous space
$SO(2)\backslash SO(3)$ via
\[
\iota: SO(2)\, g \mapsto e_1g, \quad e_1=(1,0,0).
\]
With respect to this identification, $S^2$ becomes a homogeneous quandle
associated with the quandle triplet
\[
\bigl(SO(3),\, SO(2),\, \sigma_\theta \bigr).
\]
Recall that the homogeneous quandle $Q(SO(3),SO(2),\sigma_\theta)$ is defined on
$SO(2)\backslash SO(3)$ with the operation
\[
SO(2)\, g * SO(2)\, h := SO(2)\, \sigma_\theta(g h^{-1})\, h.
\]
We compute
\begin{align*}
    \iota(SO(2)\, g * SO(2)\, h)
&=\iota(SO(2)\sigma_\theta(g h^{-1})\, h)\\
&=\iota(SO(2)h_\theta^{-1}(g h^{-1})h_\theta\, h)\\
&= e_1(gh^{-1}h_\theta h)\\
&= R_{h_\theta\cdot e_1, \theta}(e_1g)\\
&=\iota(SO(2)\, g)*\iota(SO(2)\, h),
\end{align*}
where the last equality uses the definition of the $\theta$--rotation quandle
on $S^2$ (Example~\ref{theta}).
Therefore, $\iota$ is a quandle isomorphism.
\end{proof}

\begin{prop}
Let $S^2_\theta$ be the $\theta$--rotation quandle on the $2$--sphere.  
Then the following statements hold:
\begin{enumerate}
    \item If $0 < \theta < 2\pi$ and $\theta \neq \pi$, then the quandle $S^2_\theta$
    admits an embedding into the conjugation quandle $\Conj(SO(3))$.
    \begin{align*}
        \iota\::\:S^2_\theta&\rightarrow \Conj(SO(3))\\
        SO(2)g&\mapsto g^{-1}h_\theta g
    \end{align*}

    \item If $\theta = \pi$, then $S^2_\pi$ admits an embedding into the conjugation
    quandle $\Conj(Spin(3))$.
    \begin{equation*}
        \iota\::\:S^2_\pi\rightarrow \Conj(Spin(3))
    \end{equation*}
\end{enumerate}
\end{prop}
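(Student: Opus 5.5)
For part (1), I will use Lemma~\ref{hodai.S^2_theta} to identify $S^2_\theta$ with $Q(SO(3),SO(2),\sigma_\theta)$. Since $\sigma_\theta=\mathrm{Ad}(h_\theta)$ is inner, Theorem~\ref{Main} applies with $q=h_\theta$: the map $\iota\colon Q(SO(3),SO(2),\sigma_\theta)\to\mathrm{Conj}(G\rtimes_\sigma\mathbb Z)$ of Lemma~\ref{lem:iota-hom} factors through $\eta$. This gives the quandle homomorphism $\iota'(SO(2)g)=g^{-1}h_\theta g$ into $\mathrm{Conj}(SO(3))$. (The sign convention $q$ versus $q^{-1}$ only replaces $h_\theta$ by $h_{-\theta}=h_\theta^{-1}$, which does not affect injectivity.)

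Injectivity follows from Theorem~\ref{result} once I show $\mathrm{Fix}(\sigma_\theta)=SO(2)$, since $\eta$ is injective by Lemma~\ref{lem:conj-embed}. Here $\mathrm{Fix}(\sigma_\theta)$ is the centralizer of $h_\theta$ in $SO(3)$. For $0<\theta<2\pi$ with $\theta\neq\pi$, $h_\theta$ is a nontrivial rotation about the axis $e_1$ whose eigenvalues $e^{\pm i\theta}$ are non-real. Any $g$ commuting with $h_\theta$ preserves the $1$-eigenspace $\mathbb R e_1$, so $e_1g=\pm e_1$. If $e_1g=-e_1$, then $g$ acts on $e_1^\perp$ with determinant $-1$, i.e.\ as a reflection. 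A reflection conjugates $R_\theta$ to $R_{-\theta}\neq R_\theta$, since $\theta\neq\pi$, so this case is impossible. Hence $e_1g=e_1$ and $g\in SO(2)$. Thus the centralizer is exactly $SO(2)$, and $\iota'$ is an embedding.

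For part (2), with $\theta=\pi$ the centralizer of $h_\pi=\mathrm{diag}(1,-1,-1)$ in $SO(3)$ is $S(O(1)\times O(2))\cong O(2)$, which strictly contains $SO(2)$. Indeed, $g^{-1}h_\pi g$ corresponds to the line $\pm e_1g$, so $SO(3)$ only sees $\mathbb{RP}^2$. I will therefore lift to $Spin(3)\cong Sp(1)=S^3$ via the double cover $\pi\colon Spin(3)\to SO(3)$. Let $\tilde h$ be a lift of $h_\pi$, for instance the unit quaternion $i$, and set $\tilde\sigma=\mathrm{Ad}(\tilde h)$. I expect $\mathrm{Fix}(\tilde\sigma)=C_{Spin(3)}(\tilde h)=\pi^{-1}(SO(2))$, the circle $\{\cos t+i\sin t\}$. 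The elements of $\pi^{-1}(O(2))$ that flip the axis, such as $j$, anticommute with $i$ and so are not fixed. Since $Spin(3)/\pi^{-1}(SO(2))\cong SO(3)/SO(2)\cong S^2$, the quandle $Q(Spin(3),\pi^{-1}(SO(2)),\tilde\sigma)$ should be isomorphic to $S^2_\pi$. This follows by composing the lemma's identification with $\pi$: the triplet operation is compatible with $\pi$ because $\pi\circ\tilde\sigma=\sigma_\pi\circ\pi$. Theorems~\ref{result} and~\ref{Main} then give the embedding $\pi^{-1}(SO(2))\tilde g\mapsto \tilde g^{-1}\tilde h\tilde g$ into $\mathrm{Conj}(Spin(3))$. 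This agrees with Theorem~\ref{yonemura} for $n=2$, where it is realized by the pure unit quaternions.

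The main obstacle is the centralizer computation in the two regimes. In particular, I must check carefully that in $Spin(3)$ the lift of the axis-flipping component of $O(2)$ fails to commute with $\tilde h$. I will verify this directly with quaternions: $j\,i\,j^{-1}=-i$.
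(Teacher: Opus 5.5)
Your proof is correct. Part (1) is the paper's argument: Lemma~\ref{hodai.S^2_theta} together with Theorems~\ref{result} and~\ref{Main}, taking $q=h_\theta$. The one addition is that you actually prove $\mathrm{Fix}(\sigma_\theta)=SO(2)$ via the eigenspace/reflection argument, which the paper only asserts.

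Part (2) reaches the same embedding by a different route. The paper never computes the centralizer in $Spin(3)$ directly. It identifies $S^2_\pi$ with the spherical quandle, then with $\widetilde{Gr}(3,1)$ (Remark~\ref{GrtoS^n}), and applies Theorem~\ref{Grassmann embedding} with $(n,k)=(3,1)$. That theorem rests on the general discussion before Proposition~\ref{Oriented grassmannspin}. There, a connectedness argument gives $C_{Spin(n)}(\widetilde h_{(n,k)})=p^{-1}(SO(k)\times SO(n-k))$. The paper then appeals to Borel and Helgason to identify $C_{Spin(n)}(\widetilde h_{(n,k)})\backslash Spin(n)$ with $(SO(k)\times SO(n-k))\backslash SO(n)$.

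You instead settle $n=3$ by hand:
\begin{enumerate}
\item The centralizer of $i$ in $\mathbb{H}$ is $\mathbb{R}+\mathbb{R}i$, so $C_{Sp(1)}(i)$ is a circle.
\item This circle equals $\pi^{-1}(SO(2))$, because $\pi(q)$ fixes $e_1$ exactly when $q$ commutes with $i$.
\item Since this subgroup is the full preimage of $SO(2)$, the map $\pi^{-1}(SO(2))\,\widetilde g\mapsto SO(2)\,\pi(\widetilde g)$ is a bijection for purely algebraic reasons.
\item It intertwines the quandle operations via $\pi\circ\widetilde\sigma=\sigma_\pi\circ\pi$, so no topology is needed.
\end{enumerate}

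What your route buys: it is self-contained, and it bypasses the chain $S^2_\pi\cong S^2\cong\widetilde{Gr}(3,1)$, which the paper only sketches. It also exhibits the image concretely as the pure unit quaternions, matching Theorem~\ref{yonemura}. Finally, it sidesteps a small wrinkle in the paper's connectedness step. That step uses that $Spin(k)\times Spin(n-k)$ is connected, which is false for $k=1$ since $Spin(1)=\{\pm1\}$. The failure is harmless, because $-1$ is central.

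What the paper's route buys is uniformity: it handles every $\widetilde{Gr}(n,k)$ with $n-k$ even at once.

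The two embeddings coincide, since $h_{(3,1)}=h_\pi$ and $i$ lifts it. When you write this up, replace ``I expect'' by the one-line centralizer computation in $\mathbb{H}$, which settles the claim outright.
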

\begin{proof}
We treat the two cases separately.

\medskip
\noindent
\textbf{Case 1:} $0 < \theta < 2\pi$, $\theta \neq \pi$.

In this case, the fixed point subgroup of $\sigma_\theta$ is
\[
\Fix(\sigma_\theta) = SO(2) \subset SO(3).
\]
By Theorem~\ref{Main} and Lemma \ref{hodai.S^2_theta}, the homogeneous quandle $S^2_\theta=Q(SO(3),SO(2),\sigma_\theta)$ admits an embedding into
the conjugation quandle $\Conj(SO(3))$, 
\begin{align*}
        \iota\::\:S^2_\theta&\rightarrow \Conj(SO(3))\\
        SO(2)g&\mapsto g^{-1}h_\theta g.
    \end{align*}

\medskip
\noindent
\textbf{Case 2:} $\theta = \pi$.

In this case, the fixed point subgroup satisfies
\[
\Fix(\sigma_\pi) \supsetneqq SO(2),
\]
so it is no longer sufficient to embed into $SO(3)$.  
Instead, we consider the universal cover $Spin(3)$ of $SO(3)$.  
Moreover, for $\theta = \pi$, the quandle $S^2_\pi$ is isomorphic to the
spherical quandle $S^2$, as in Example~\ref{theta}.  
As will be shown in Remark~\ref{GrtoS^n} and Theorem~\ref{Grassmann embedding},
$S^2_\pi$ is therefore isomorphic to a homogeneous quandle on
$\widetilde{{Gr}}(3,1)$ and admits an embedding into $\Conj(Spin(3))$, 

\begin{align*}
    \iota\::\:S^2_\pi &\rightarrow \Conj(Spin(3))\\
    \mr{Stab}_{{Spin}(3)}(\widetilde{h}_{(3,1)})\,\widetilde{g}
    &\mapsto \widetilde{g}^{-1}\widetilde{h}_{(3,1)} \widetilde{g},
\end{align*}
% \begin{align*}
%         \iota\::\:S^2_\pi&\rightarrow \Conj(Spin(3))\\
%         \mr{Stab}(\widetilde{h}_{(3,1)})\widetilde{g}&\mapsto \widetilde{g}^{-1}\widetilde{h}_{(3,1)} \widetilde{g},
%     \end{align*}
    where $\widetilde{h}_{(3,1)}$ is a lift to $Spin(3)$ of
\[
h_{(3,1)} :=
\begin{pmatrix}
1 & 0 & 0\\
0 & -1 & 0\\
0 & 0 & -1
\end{pmatrix}
\in SO(3),
\]
$Spin(3)$ acts on itself by conjugation, and $\mr{Stab}_{{Spin}(3)}(\widetilde{h}_{(3,1)})$ is its stabilizer subgroup.

\medskip

Combining the two cases, the proposition follows.
\end{proof}

\begin{rem}
For any $\theta \in (0,2\pi)$ with $\theta \neq \pi$, 
the quandle $S^2_\theta$ can also be embedded into $\Conj(Spin(3))$.
Indeed, the universal covering map $p : Spin(3) \to SO(3)$ 
lifts the embedding $S^2_\theta \to \Conj(SO(3))$, 
providing a canonical embedding into $\Conj(Spin(3))$.
Since $Spin(3)$ is isomorphic to $SU(2)$, 
this embedding agrees with the realization of spherical quandles
as conjugation quandles in $SU(2)$ described in Lemma~4.4 of Clark and Saito~\cite{ClarkSaito}.
\end{rem}

\begin{rem}
One can similarly define a quandle structure on the oriented Grassmannian
$\widetilde{Gr}(n,n-2)$ by considering $\theta$-rotations.
By an analogous argument, it is expected that this quandle admits an embedding
into $\Conj(SO(n))$ or $\Conj(Spin(n))$.
\end{rem}

\subsection{Embedding of the unoriented Grassmann quandle}\label{unoriented Grassmann quandle}
We investigate an embedding of the Grassmann quandle introduced in Example $\ref{Grassmannquandle}$.  
For this purpose, we first discribe it by means of a quandle triplet and then describe the induced quandle structure. Let $n$ and $k$ be integers with $1\le k\le n$.

\begin{lemma}\label{lemma:GrassmannHomogeneous}
Consider the Grassmann quandle $Gr(n,k)$.  
Define an automorphism $\sigma=\mr{Ad}(h_{(n,k)}): O(n)\rightarrow O(n)$ for  
\[
h_{(n,k)} = \begin{pmatrix}
    E_k&0\\
    0&-E_{n-k}
\end{pmatrix}, \]namely
\[
\sigma(g) = h_{(n,k)}^{-1}\, g\, h_{(n,k)}\; \text{for} \; g \in O(n),
\]
Then $(O(n),\, O(k)\times O(n-k),\, \sigma\bigr)$ is a quandle triplet and there is a quandle isomorphism
\[
Gr(n,k) \cong Q\bigl(O(n),\, O(k)\times O(n-k),\, \sigma\bigr).
\]
\end{lemma}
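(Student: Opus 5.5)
We will mirror the proof of Lemma~\ref{hodai.S^2_theta}: identify $Gr(n,k)$ with a coset space of $O(n)$ and check that the quandle operations correspond.

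\textbf{Step 1: the triplet.} Since $h_{(n,k)}^2=E_n$, conjugation by $h_{(n,k)}$ is an (involutive) automorphism $\sigma$ of $O(n)$. Write $g=\begin{pmatrix}A&B\\C&D\end{pmatrix}$ in blocks of sizes $k$ and $n-k$. Then
\[
\sigma(g)=\begin{pmatrix}A&-B\\-C&D\end{pmatrix}.
\]
So $\Fix(\sigma)$ consists exactly of the block-diagonal orthogonal matrices, namely $O(k)\times O(n-k)$. In particular $H:=O(k)\times O(n-k)\subset\Fix(\sigma)$, and $(O(n),H,\sigma)$ is a quandle triplet in the sense of Definition~\ref{triplet}. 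In fact $\Fix(\sigma)=H$, which is what Theorem~\ref{result} will need later.

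\textbf{Step 2: the bijection.} Let $V_0=\mathrm{span}(e_1,\dots,e_k)$, and note that $O(n)$ acts on row vectors from the right. Define
\[
\phi\colon H\backslash O(n)\to Gr(n,k),\qquad Hg\mapsto V_0g.
\]
This is well defined because $H$ is exactly the stabilizer of $V_0$: an orthogonal $g$ with $V_0g=V_0$ also preserves $V_0^\perp$, so it is block diagonal. The map is surjective because $O(n)$ acts transitively on orthonormal $k$-frames. Injectivity follows from the orbit–stabilizer theorem.

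\textbf{Step 3: the operations agree.} For $W=V_0h$, the reflection with respect to $W$ (identity on $W$, $-1$ on $W^\perp$) is the matrix $s_W=h^{-1}h_{(n,k)}h$, acting on the right. This holds because $h_{(n,k)}$ is the reflection with respect to $V_0$, and $h$ maps $V_0$ to $W$ and $V_0^\perp$ to $W^\perp$. On the quandle side,
\[
\phi(Hg*Hh)=V_0\,h_{(n,k)}^{-1}gh^{-1}h_{(n,k)}h=V_0\,g\,(h^{-1}h_{(n,k)}h)=(V_0g)s_W,
\]
where we used $V_0h_{(n,k)}^{-1}=V_0$. The right-hand side is $\phi(Hg)*\phi(Hh)$, so $\phi$ is a quandle isomorphism.

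\textbf{Main obstacle.} The only delicate point is bookkeeping with the right-action convention. One must check that $s_W=h^{-1}h_{(n,k)}h$, and not $hh_{(n,k)}h^{-1}$, is the reflection with respect to $V_0h$ for row vectors. This can be verified on the vectors $vh$ with $v\in V_0$ and $v\in V_0^\perp$.
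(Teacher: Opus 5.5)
Your proof is correct and follows the paper's argument. You identify $Gr(n,k)$ with $(O(k)\times O(n-k))\backslash O(n)$ via the stabilizer of $V_0$, show $V_0g * V_0h = V_0\,g h^{-1}h_{(n,k)}h$, and absorb the leading $h_{(n,k)}^{-1}$ using $h_{(n,k)}\in O(k)\times O(n-k)$. The differences are minor improvements: you derive the reflection matrix $h^{-1}h_{(n,k)}h$ conceptually rather than from the projection formula $2\sum_j\langle e_jh,e_ig\rangle e_jh-e_ig$, and you explicitly verify $\mathrm{Fix}(\sigma)=O(k)\times O(n-k)$ (hence the triplet condition) by the block computation, which the paper leaves implicit.
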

\begin{proof}
Let $e_1,\dots,e_n$ be the standard basis of $\mathbb R^n$ and set
\[
V_0:=\mr{span}\{e_1,\dots,e_k\}\in Gr(n,k).
\]
The orthogonal group $O(n)$ acts transitively on $Gr(n,k)$ in natural way.
The stabilizer subgroup of $V_0$ is $O(k)\times O(n-k)$.
Hence we have a homogeneous space description
\[
Gr(n,k)\cong (O(k)\times O(n-k))\backslash O(n)
\]
by the map
\begin{align*}
    \iota: Gr(n,k)&\rightarrow (O(k)\times O(n-k))\backslash O(n)\\
    V=V_0g=\mr{span}\{e_1g,\dots,e_kg\}&\mapsto
\iota(V)=(O(k)\times O(n-k))\,g.
\end{align*}

Next we verify that $\iota$ is a quandle homomorphism.
Let $V=V_0 g$ and $W=V_0 h$.
The Grassmann quandle operation is given by the reflection of $V$
with respect to $W$, hence
\begin{align*}
V*W
&= \mr{span}\Bigl\{
2\sum_{j=1}^k \langle e_j h , e_i g\rangle e_j h - e_i g
\ \big|\ i=1,\dots,k
\Bigr\}\\
 &=\mr{span}(e_1 g h^{-1} h_{(n,k)} h,\dots,
                e_k g h^{-1} h_{(n,k)} h)\\
&=V_0gh^{-1}h_{(n,k)}h.
\end{align*}
This computation shows that
\begin{align*}
    \iota(V*W)
    &=\iota\bigl(
      V_0gh^{-1}h_{(n,k)}h
      \bigr)\\
    &=(O(k)\times O(n-k))\, g h^{-1} h_{(n,k)} h.
\end{align*}

On the other hand, since
\[
\sigma(gh^{-1})h
= h_{(n,k)}^{-1}\, g h^{-1}\, h_{(n,k)}\, h,
\]
and $h_{(n,k)}\in O(k)\times O(n-k)$, it follows that
\[
(O(k)\times O(n-k))\,\sigma(gh^{-1})h
=
(O(k)\times O(n-k))\, g h^{-1} h_{(n,k)} h.
\]
Hence $\iota$ is a quandle isomorphism.
\end{proof}

% \begin{prop}[Embedding of the Grassmann quandle]
% The Grassmann quandle $Gr(n,k)$ admits an embedding into the conjugation quandle $\Conj(O(n))$.
% \begin{equation*}
%     Gr(n, k)\rightarrow \Conj (O(n))
% \end{equation*}
% \end{prop}
\begin{prop}[Embedding of the Grassmann quandle]
The map
\begin{align*}
     \iota \colon Gr(n,k) &\rightarrow \Conj(O(n))\\
     V_0g&\mapsto g^{-1}h_{(n, k)}g
\end{align*}
is an embedding of quandle.
\end{prop}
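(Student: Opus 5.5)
The plan is to combine Lemma~\ref{lemma:GrassmannHomogeneous} with Theorems~\ref{result} and~\ref{Main}. By Lemma~\ref{lemma:GrassmannHomogeneous}, $Gr(n,k)\cong Q(O(n),O(k)\times O(n-k),\sigma)$ with $\sigma=\mr{Ad}(h_{(n,k)})$, and the isomorphism sends $V_0g$ to $(O(k)\times O(n-k))g$. Since $\sigma$ is inner with $q=h_{(n,k)}$ (note $h_{(n,k)}^{-1}=h_{(n,k)}$), Theorem~\ref{Main} shows that the homomorphism $\iota$ of Lemma~\ref{lem:iota-hom} factors as $\eta\circ\iota'$, where
\[
\iota'\bigl((O(k)\times O(n-k))g\bigr)=g^{-1}h_{(n,k)}g .
\]
Composing with the isomorphism of Lemma~\ref{lemma:GrassmannHomogeneous} gives exactly the stated map $V_0g\mapsto g^{-1}h_{(n,k)}g$. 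This map is a quandle homomorphism into $\Conj(O(n))$, because $\eta$ is an injective homomorphism and $\iota=\eta\circ\iota'$ is one. One can also check this directly: $(gh^{-1}h_{(n,k)}h)^{-1}h_{(n,k)}(gh^{-1}h_{(n,k)}h)$ equals $h_0^{-1}g_0h_0$, where $g_0=g^{-1}h_{(n,k)}g$ and $h_0=h^{-1}h_{(n,k)}h$. The map is also well defined: if $g$ is replaced by $ag$ with $a\in O(k)\times O(n-k)$, then $a$ commutes with $h_{(n,k)}$.

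For injectivity I would use Theorem~\ref{result}, which needs $\Fix(\sigma)=O(k)\times O(n-k)$. An element $g\in O(n)$ is fixed by $\sigma$ exactly when it commutes with $h_{(n,k)}$. Then $g$ preserves the $(+1)$-eigenspace $V_0$ and the $(-1)$-eigenspace $V_0^\perp$ of $h_{(n,k)}$. So $g$ is block diagonal, and each block is orthogonal, which gives $g\in O(k)\times O(n-k)$. The reverse inclusion is clear. Theorem~\ref{result} then gives injectivity of $\iota$, and so $\iota'$ is injective too, since $\eta$ is a map with $\iota=\eta\circ\iota'$.

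A more concrete alternative is to note that $g^{-1}h_{(n,k)}g$ is the orthogonal reflection fixing $V_0g$ and acting by $-1$ on its orthogonal complement. Its $(+1)$-eigenspace is therefore $V_0g$, so the map determines $V$. This argument gives injectivity immediately and also confirms well-definedness.

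The only real work is the fixed-point computation, which the eigenspace argument settles. I expect no serious obstacle. The one thing to keep straight is the row-vector convention: with this convention, the $(+1)$-eigenspace of $g^{-1}h_{(n,k)}g$ acting from the right is $V_0g$.
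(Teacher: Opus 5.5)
Your proposal is correct and follows essentially the same route as the paper. You identify $Gr(n,k)$ with $Q(O(n),O(k)\times O(n-k),\sigma)$, note that $\sigma=\mathrm{Ad}(h_{(n,k)})$ is inner with $q=h_{(n,k)}$ and $\Fix(\sigma)=O(k)\times O(n-k)$, and apply the main theorems to get $V_0g\mapsto g^{-1}h_{(n,k)}g$. Beyond the paper, you justify the fixed-point computation via the eigenspaces of $h_{(n,k)}$, which the paper only asserts. Your observation that the $(+1)$-eigenspace of $g^{-1}h_{(n,k)}g$ is $V_0g$ also gives a self-contained check of well-definedness and injectivity.
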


\begin{proof}
By Lemma~\ref{lemma:GrassmannHomogeneous}, we have
\[
Gr(n,k)\cong Q\bigl(O(n),O(k)\times O(n-k),\sigma\bigr),
\quad
\sigma=\mr{Ad}(h_{(n,k)}).
\]
Since
\[
\Fix(\sigma)=O(k)\times O(n-k),
\]
and $\sigma\in\Inn(O(n))$,
Theorem~\ref{Main} implies that
$Q(O(n),O(k)\times O(n-k),\sigma)$  can be embedded into $\Conj(O(n))$.
Under the above identification, the Grassmann quandle $Gr(n, k)$ can be embedded into $\Conj(O(n))$ by the map
% \[
% f\colon Gr(n,k)\to \Conj(O(n))
% \]
% is explicitly given by
\[
\iota(V_0g)=g^{-1}h_{(n,k)}g,
\qquad g\in O(n).
\]
\end{proof}

\begin{rem}
When $k=1$, the Grassmann manifold $Gr(n,1)$ is naturally identified
with the real projective space $\mathbb{R}P^{n-1}$.
In this case, the above embedding
\[
Gr(n,1)\cong Q\bigl(O(n),\, O(1)\times O(n-1),\, \sigma\bigr)
\rightarrow \Conj (O(n))
\]
coincides with the embedding of $\mathbb{R}P^{n-1}$ into
$O(n)$ proposed by Eisermann\cite{Eisermann} and constructed by Yonemura\cite{Yonemura}.
\end{rem}

% \textcolor{red}{oriented quandle, element kigou dousiyo konosita section gaiikana?}

\subsection{Embedding of the oriented Grassmann quandle}\label{oriented grassmann}
In this section, we study embeddings of the oriented Grassmann quandle.
Although the oriented Grassmann manifold
\(\widetilde{Gr}(n,k)\) is always realized as the homogeneous space
\((SO(k)\times SO(n-k))\backslash SO(n)\),
the construction of the quandle structure and the target of the embedding
depend on the parity of \(n-k\).

For the oriented Grassmann manifold $\widetilde{Gr}(n,k)$,
we write
\[
\mathrm{span}\langle v_1,\ldots,v_k\rangle
\]
for the oriented $k$--dimensional subspace determined by an
ordered orthonormal frame $(v_1,\ldots,v_k)$.
The angle bracket $\langle\cdots\rangle$ is used to distinguish oriented subspaces
from the underlying unoriented subspace $\mr{span}(v_1,\cdots, v_k)$.

%We first treat the case where \(n-k\) is even, and then discuss the case
%where \(n-k\) is odd.
%\textcolor{red}{here syuuron hukkatu this map de isom. only}
% The oriented Grassmann manifold \(\widetilde{Gr}(n,k)\)
% admits the homogeneous space description
% \[
% \widetilde{Gr}(n,k)
% \cong SO(n)/(SO(k)\times SO(n-k)).
% \]

\begin{lemma}\label{oriGr,isom}
Assume that \(n-k\) is even.
Let
\[
h_{(n,k)}=
\begin{pmatrix}
E_k & 0\\
0 & -E_{n-k}
\end{pmatrix}
\in SO(n),
\]
and define an automorphism
\(\sigma=\mr{Ad}(h_{(n,k)}): SO(n)\rightarrow SO(n)\).
Then there exists a quandle isomorphism
\[
\widetilde{Gr}(n,k)
\cong Q\bigl(SO(n),\,SO(k)\times SO(n-k),\,\sigma\bigr).
\]
\end{lemma}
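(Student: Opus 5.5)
I will mirror the proof of Lemma~\ref{lemma:GrassmannHomogeneous}, replacing $O(n)$ by $SO(n)$ and unoriented spans by oriented ones.

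\textbf{Setup.} Set $V_0:=\mathrm{span}\langle e_1,\ldots,e_k\rangle\in\widetilde{Gr}(n,k)$ with the standard orientation. The group $SO(n)$ acts transitively on $\widetilde{Gr}(n,k)$ by $\mathrm{span}\langle v_1,\ldots,v_k\rangle\, g=\mathrm{span}\langle v_1g,\ldots,v_kg\rangle$. Transitivity holds because any oriented orthonormal frame of a $k$-plane extends to an orthonormal basis of $\mathbb{R}^n$, and flipping the sign of the last vector, if necessary, puts it in $SO(n)$; this is where we use $k<n$, the case $k=n$ being trivial.

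\textbf{Stabilizer.} The stabilizer of $V_0$ consists of the $g\in SO(n)$ that preserve $\mathrm{span}(e_1,\ldots,e_k)$ together with its orientation. Such $g$ are block diagonal $\mathrm{diag}(A,B)$ with $\det A=1$ and $\det A\det B=1$, so the stabilizer is $SO(k)\times SO(n-k)$. This gives the bijection
\[
\iota\colon \widetilde{Gr}(n,k)\to (SO(k)\times SO(n-k))\backslash SO(n),\qquad V_0g\mapsto (SO(k)\times SO(n-k))\,g.
\]

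\textbf{Quandle triplet.} Since $n-k$ is even, $\det h_{(n,k)}=(-1)^{n-k}=1$, so $h_{(n,k)}\in SO(n)$ and $\sigma=\mathrm{Ad}(h_{(n,k)})$ is an inner automorphism of $SO(n)$. Every element of $SO(k)\times SO(n-k)$ is block diagonal and so commutes with $h_{(n,k)}$. Hence $SO(k)\times SO(n-k)\subset\mathrm{Fix}(\sigma)$, and $(SO(n),SO(k)\times SO(n-k),\sigma)$ is a quandle triplet. Moreover $h_{(n,k)}$ itself lies in $SO(k)\times SO(n-k)$, precisely because $\det(-E_{n-k})=1$. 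It follows that
\[
(SO(k)\times SO(n-k))\,\sigma(gh^{-1})h=(SO(k)\times SO(n-k))\,gh^{-1}h_{(n,k)}h.
\]

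\textbf{Homomorphism check.} Take $V=V_0g$ and $W=V_0h$. The reflection of $\mathbb{R}^n$ with respect to $W$, which fixes $W$ and acts by $-1$ on $W^\perp$, is the linear map $x\mapsto x\,h^{-1}h_{(n,k)}h$. The operation in Example~\ref{orientedGrassmannquandle} transports the orientation of $V$, so the oriented image is computed frame by frame:
\[
V*W=\mathrm{span}\langle e_1gh^{-1}h_{(n,k)}h,\ldots,e_kgh^{-1}h_{(n,k)}h\rangle=V_0\,gh^{-1}h_{(n,k)}h.
\]
Combining this with the coset identity above gives $\iota(V*W)=\iota(V)*\iota(W)$.

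\textbf{Main obstacle.} The delicate point is making sure the orientation convention in Example~\ref{orientedGrassmannquandle} is exactly "apply the reflection map to an oriented frame of $V$." Once that is granted, the only genuinely new ingredient relative to the unoriented case is the parity hypothesis, which is used exactly twice: to have $h_{(n,k)}\in SO(n)$, and to have $h_{(n,k)}\in SO(k)\times SO(n-k)$ so that it can be absorbed into the coset. I would also double-check that the reflection matrix $h^{-1}h_{(n,k)}h$ indeed fixes $W=V_0h$ pointwise and negates $W^\perp$. This is immediate: $e_ih\cdot h^{-1}h_{(n,k)}h=e_ih$ for $i\le k$, and $e_ih\cdot h^{-1}h_{(n,k)}h=-e_ih$ for $i>k$.
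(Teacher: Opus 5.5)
Your argument is correct and follows the paper's proof essentially step for step: the transitive right $SO(n)$-action with stabilizer $SO(k)\times SO(n-k)$ at $V_0$, absorbing $h_{(n,k)}\in SO(k)\times SO(n-k)$ into the coset, and the frame-by-frame formula $V_0g*V_0h=V_0\,gh^{-1}h_{(n,k)}h$. You are in fact more explicit than the paper about the triplet condition and about where the parity of $n-k$ is used.

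One small correction: the case $k=n$ is not trivial but must be excluded. There $\widetilde{Gr}(n,n)$ consists of the two orientations of $\mathbb{R}^n$, both fixed by $SO(n)$, whereas the coset space is a single point. So the lemma, in the paper as well as in your proof, should be read with $k<n$.
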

%     The special orthogonal group \(SO(n)\) acts on the oriented Grassmann manifold
% \(\widetilde{Gr}(n,k)\) transitively in natural way. For an oriented \(k\)-frame \((v_1,\cdots,v_k)\) representing
% \(V=\operatorname{span}(v_1,\cdots,v_k)\in\widetilde{Gr}(n,k)\) and for
% \(g\in SO(n)\), we define
% \[
% V\cdot g
% = \operatorname{span}\langle v_1 g,\cdots,v_k g\rangle.
% \]
% This defines a transitive right action of \(SO(n)\) on
% \(\widetilde{Gr}(n,k)\).
% Let \(V_0=\operatorname{span}\langle e_1,\cdots,e_k\rangle\).
% Then its stabilizer is given by
% \[
% \mathrm{Stab}_{SO(n)}(V_0)=SO(k)\times SO(n-k).
% \]
% It gives an identification between $\widetilde{Gr}(n,k)$ and \((SO(k)\times SO(n-k))\backslash SO(n)\) given by the map
% \begin{align*}
%     \iota:\widetilde{Gr}(n,k)&\rightarrow(SO(k)\times SO(n-k))\backslash SO(n)\\
%     V=V_0g=\operatorname{span}\langle e_1g,\cdots, e_kg\rangle&\mapsto \iota(V)=(SO(k)\times SO(n-k))g,
% \end{align*}
% where $V_0=\mr{span}\langle e_1, \cdots, e_k\rangle$ and $\mr{Stab}(V_0)=SO(k)\times SO(n-k)$.
% For an oriented \(k\)-frame \((v_1,\cdots,v_k)\) representing
% \(V=\operatorname{span}(v_1,\cdots,v_k)\in\widetilde{Gr}(n,k)\) and for
% \(g\in SO(n)\), we define
% \[
% V\cdot g=\operatorname{span}\langle v_1,\cdots,v_k\rangle\cdot g
% = \operatorname{span}\langle v_1 g,\cdots,v_k g\rangle.
% \]

%\begin{proof}
% The special orthogonal group \(SO(n)\) acts on the oriented Grassmann manifold
% \(\widetilde{Gr}(n,k)\) transitively in a natural way. 

\begin{proof}
For an oriented \(k\)-frame \((v_1,\cdots,v_k)\) representing
\(V=\operatorname{span}\langle v_1,\cdots,v_k\rangle\in\widetilde{Gr}(n,k)\) and for
\(g\in SO(n)\), we define
\[
V\cdot g
= \operatorname{span}\langle v_1 g,\cdots,v_k g\rangle.
\]
This defines a transitive right action of \(SO(n)\) on
\(\widetilde{Gr}(n,k)\).
Let \(V_0=\operatorname{span}\langle e_1,\cdots,e_k\rangle\).
Then its stabilizer of $SO(n)$-action at $V_0$ is given by
\[
\mathrm{Stab}_{SO(n)}(V_0)=SO(k)\times SO(n-k).
\]
Hence we obtain an identification
\[
\widetilde{Gr}(n,k)
\cong (SO(k)\times SO(n-k))\backslash SO(n)
\]
given by the map
\begin{align*}
\iota:\widetilde{Gr}(n,k)&\rightarrow(SO(k)\times SO(n-k))\backslash SO(n)\\
V=V_0g=\operatorname{span}\langle e_1g,\cdots, e_kg\rangle
&\mapsto (SO(k)\times SO(n-k))g.
\end{align*}
% This action is transitive, and the stabilizer of the standard oriented
% \(k\)-plane \(\operatorname{span}(e_1,\cdots,e_k)\) is given by
% \(SO(k)\times SO(n-k)\).
% Consequently, we obtain the identification
% \[
% \widetilde{Gr}(n,k)
% \cong (SO(k)\times SO(n-k))\backslash SO(n).
% \]
% Let
% \[
% V_0:=\operatorname{span}\{e_1,\dots,e_k\}\in \widetilde{Gr}(n,k).
% \]
% Define a map
% \[
% \iota :
% (SO(k)\times SO(n-k))\backslash SO(n)
% \longrightarrow \widetilde{Gr}(n,k)
% \]
% by
% \[
% \iota\bigl((SO(k)\times SO(n-k))\,g\bigr)
% := V_0 g
% = \operatorname{span}(e_1 g,\dots,e_k g).
% \]
% This map is well-defined and bijective.

We show that $\iota$ is a quandle homomorphism.
Let
\[
S:=SO(k)\times SO(n-k).
\]
Take arbitrary elements $Sg,Sh\in S\backslash SO(n)$.
By definition of $\iota$, we have
\[
\iota(Sg)=V_0 g, \qquad \iota(Sh)=V_0 h.
\]
The quandle operation on the homogeneous quandle
$Q(SO(n),S,\sigma)$ is given by
\[
(Sg)*(Sh)=S\,\sigma(g h^{-1})h.
\]
Therefore, since $h_{(n,k)}\in S$, hence $V_0h_{(n,k)}^{-1}=V_0$, we have
\begin{align*}
\iota\bigl((Sg)*(Sh)\bigr)
&=\iota\bigl(S\,\sigma(g h^{-1})h\bigr)\\
&=V_0\,\sigma(g h^{-1})h\\
&=V_0\, h_{(n,k)}^{-1} g h^{-1} h_{(n,k)} h.
\end{align*}
On the other hand, as we can see in the proof of Lemma  
\ref{lemma:GrassmannHomogeneous}, the Grassmann quandle operation on
$\widetilde{Gr}(n,k)$ is given by
\[
(V_0 g)*(V_0 h)
=V_0 g h^{-1} h_{(n,k)} h.
\]
Therefore,
\[
\iota\bigl((Sg)*(Sh)\bigr)
=\iota(Sg)*\iota(Sh),
\]
which shows that $\iota$ is a quandle homomorphism.
Since $\iota$ is bijective, it is a quandle isomorphism.
\end{proof}

By the above lemma, we have an isomorphism
\[
\widetilde{Gr}(n,k)
\cong Q\bigl(SO(n),\, SO(k)\times SO(n-k),\, \sigma\bigr).
\]
However, the fixed-point subgroup \(\mathrm{Fix}(\sigma)\) does not coincide with
\(SO(k)\times SO(n-k)\).
Therefore, it is necessary to pass to the
larger covering group \(Spin(n)\) or \(Pin^+(n)\).

We first treat the case where \(n-k\) is even.
Let
\[
p\colon Spin(n)\to SO(n)
\]
be the double covering map, and fix a lift
\[
\widetilde h_{(n,k)}\in Spin(n)
\]
of
\[
h_{(n,k)}
=
\begin{pmatrix}
E_k & 0\\
0 & -E_{n-k}
\end{pmatrix}
\in SO(n).
\]
Define
\[
\widetilde{\sigma}
=
\mathrm{Ad}(\widetilde h_{(n,k)}).
\]

The group $Spin(n)$ acts on the oriented Grassmannian
$\widetilde{Gr}(n,k)$ via the double covering
$p\colon Spin(n)\to SO(n)$.
Explicitly, for
\[
V=\operatorname{span}\langle v_1,\dots,v_k\rangle,
\]
we define
\[
V\cdot \widetilde g
=
\operatorname{span}
\langle v_1p(\widetilde g),\dots,v_kp(\widetilde g)\rangle.
\]

Let
\[
V_0=\operatorname{span}\langle e_1,\dots,e_k\rangle.
\]
We denote by
\[
\mathrm{Stab}_{Spin(n)}(V_0)
=
\{
\widetilde g\in Spin(n)
\mid
V_0\cdot \widetilde g=V_0
\}
\]
the stabilizer of $V_0$ under this action.
Then
\[
\mathrm{Stab}_{Spin(n)}(V_0)
=
p^{-1}(SO(k)\times SO(n-k)).
\]

On the other hand, let $C_{Spin(n)}(\tilde h_{(n,k)})$ be the centralizer of $\tilde h_{(n,k)}$ in $Spin(n)$. 
It is easy to see that 
\[
{\rm Fix}(\tilde\sigma)=C_{Spin(n)}(\tilde h_{(n,k)})\subset p^{-1}(SO(k)\times SO(n-k)). 
\]
Since the natural  multiplication map 
\[
Spin(k)\times Spin(n-k)\rightarrow Spin(n)
\]gives a surjective homomorphism onto $p^{-1}(SO(k)\times SO(n-k))$ and 
\[
\tilde g\tilde h_{(n,k)}\tilde g^{-1}\tilde h_{(n,k)}^{-1} \in {\rm ker}(p)\cong \{\pm 1\}
\]for any $\tilde g\in p^{-1}(SO(k)\times SO(n-k))$, the connectedness of $Spin(k)\times Spin(n-k)$ implies that 
$\tilde g\tilde h_{(n,k)}\tilde g^{-1}\tilde h_{(n,k)}^{-1}=1$. 
Therefore we have 
$C_{Spin(n)}(\tilde h_{(n,k)}) = p^{-1}(SO(k)\times SO(n-k))$.

\begin{prop}\label{Oriented grassmannspin}
The oriented Grassmann quandle $\widetilde{Gr}(n,k)$
is isomorphic, as a quandle, to the homogeneous quandle
\[
Q\bigl(
Spin(n),
C_{Spin(n)}(\widetilde h_{(n,k)}),
\widetilde{\sigma}
\bigr).
\]
\end{prop}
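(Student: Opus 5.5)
I would construct the isomorphism directly, lifting the proof of Lemma~\ref{oriGr,isom} through the covering $p\colon Spin(n)\to SO(n)$. Write $\widetilde S:=C_{Spin(n)}(\widetilde h_{(n,k)})$. By the discussion just before the statement, $\widetilde S=p^{-1}(SO(k)\times SO(n-k))=\mathrm{Stab}_{Spin(n)}(V_0)$. Hence $(Spin(n),\widetilde S,\widetilde\sigma)$ is a quandle triplet; indeed $\widetilde S=\Fix(\widetilde\sigma)$ exactly. The candidate map is
\[
\widetilde\iota\colon \widetilde S\backslash Spin(n)\longrightarrow \widetilde{Gr}(n,k),\qquad \widetilde S\,\widetilde g\longmapsto V_0\cdot\widetilde g=V_0\,p(\widetilde g).
\]

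\textbf{Bijectivity.} Well-definedness and injectivity follow from $\widetilde S=\mathrm{Stab}_{Spin(n)}(V_0)$: the equation $V_0\widetilde g_1=V_0\widetilde g_2$ holds iff $\widetilde g_1\widetilde g_2^{-1}\in\widetilde S$. Surjectivity follows because $p$ is surjective and $SO(n)$ acts transitively on $\widetilde{Gr}(n,k)$. Equivalently, $\widetilde\iota$ is the composite of the bijection
\[
\widetilde S\backslash Spin(n)\;\cong\;(SO(k)\times SO(n-k))\backslash SO(n)
\]
induced by $p$ with the bijection of Lemma~\ref{oriGr,isom}. This uses $\widetilde S=p^{-1}(SO(k)\times SO(n-k))$ and $\ker p\subset\widetilde S$.

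\textbf{Homomorphism property.} Since $p$ is a homomorphism, $p(\widetilde\sigma(\widetilde x))=\sigma(p(\widetilde x))$ with $\sigma=\mr{Ad}(h_{(n,k)})$. Therefore
\[
\widetilde\iota\bigl(\widetilde S\widetilde g*\widetilde S\widetilde h\bigr)=V_0\,p\bigl(\widetilde\sigma(\widetilde g\widetilde h^{-1})\widetilde h\bigr)=V_0\,\sigma(gh^{-1})h,
\]
where $g=p(\widetilde g)$ and $h=p(\widetilde h)$. From the proof of Lemma~\ref{oriGr,isom} we have $V_0\sigma(gh^{-1})h=V_0\,gh^{-1}h_{(n,k)}h=(V_0g)*(V_0h)$, using $h_{(n,k)}\in SO(k)\times SO(n-k)$ to absorb the leading $h_{(n,k)}^{-1}$. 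This equals $\widetilde\iota(\widetilde S\widetilde g)*\widetilde\iota(\widetilde S\widetilde h)$. In other words, the projection $Q(Spin(n),\widetilde S,\widetilde\sigma)\to Q(SO(n),SO(k)\times SO(n-k),\sigma)$ is a bijective quandle homomorphism, and composing with Lemma~\ref{oriGr,isom} gives the claim.

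\textbf{Main obstacle.} The only nontrivial input is the equality $C_{Spin(n)}(\widetilde h_{(n,k)})=p^{-1}(SO(k)\times SO(n-k))$, already argued above via connectedness of $Spin(k)\times Spin(n-k)$. This requires that $Spin(k)\times Spin(n-k)\to p^{-1}(SO(k)\times SO(n-k))$ is surjective. For small $k$ or $n-k$ (e.g.\ $k=1$, where $Spin(1)$ is finite), connectedness fails, and I would instead directly check that $\pm 1$ and the relevant lifts commute with $\widetilde h_{(n,k)}$. We do have $\widetilde h_{(n,k)}=\pm e_{k+1}\cdots e_n$ in the Clifford algebra. The commutation of $e_{k+1}\cdots e_n$ with vectors $e_i$ ($i\le k$) and with even products in $e_{k+1},\dots,e_n$ then follows from $n-k$ being even. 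This is where the parity hypothesis is really used.
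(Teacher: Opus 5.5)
Your proof is correct, and it uses the same map as the paper. Your $\widetilde\iota$ is the paper's $\Phi\colon C_{Spin(n)}(\widetilde h_{(n,k)})\backslash Spin(n)\to (SO(k)\times SO(n-k))\backslash SO(n)$ composed with the bijection of Lemma~\ref{oriGr,isom}. Your computation via $p\circ\widetilde\sigma=\sigma\circ p$ is exactly what the paper means by ``$\Phi$ intertwines $\widetilde\sigma$ and $\sigma$''.

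The genuine difference is how bijectivity is obtained. The paper cites Borel (the centralizer is connected) and Helgason to say that $C_{Spin(n)}(\widetilde h_{(n,k)})\backslash Spin(n)$ is the universal cover of $(SO(k)\times SO(n-k))\backslash SO(n)$. It then concludes that $\Phi$ is a diffeomorphism. That last step silently needs either simple connectivity of $\widetilde{Gr}(n,k)$ or $\ker p\subset C_{Spin(n)}(\widetilde h_{(n,k)})$. You instead use the algebraic identity $C_{Spin(n)}(\widetilde h_{(n,k)})=p^{-1}(SO(k)\times SO(n-k))=\mathrm{Stab}_{Spin(n)}(V_0)$ from the preceding discussion. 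That identity gives well-definedness and injectivity at once, and transitivity gives surjectivity, with no covering theory. This is more elementary, and it makes explicit what the covering argument leaves implicit.

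Your caution about $k=1$ is justified: $Spin(1)=\{\pm1\}$ is disconnected. The paper's argument still survives if one uses connectedness of the image $p^{-1}(SO(k)\times SO(n-k))$ instead, which holds because it contains the connected group $Spin(n-k)\ni -1$.

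One small point: your Clifford-algebra fallback only proves $p^{-1}(SO(k)\times SO(n-k))\subset C_{Spin(n)}(\widetilde h_{(n,k)})$, which is what injectivity needs. Well-definedness of $\widetilde\iota$ needs the reverse inclusion. That is not automatic, because the centralizer of $h_{(n,k)}$ in $SO(n)$ is $S(O(k)\times O(n-k))$. It does follow from the same calculus: for $n-k$ even, $e_1e_{k+1}$ anticommutes with $e_{k+1}\cdots e_n$. Hence every lift of an element of $S(O(k)\times O(n-k))\setminus(SO(k)\times SO(n-k))$ anticommutes with $\widetilde h_{(n,k)}$.
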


\begin{proof}
By the above discussion,
\[
\mathrm{Fix}(\widetilde{\sigma})
=
C_{Spin(n)}(\widetilde h_{(n,k)}).
\]
Hence
\[
\bigl(
Spin(n),
C_{Spin(n)}(\widetilde h_{(n,k)}),
\widetilde{\sigma}
\bigr)
\]
is a quandle triplet, and therefore
\[
Q\bigl(
Spin(n),
C_{Spin(n)}(\widetilde h_{(n,k)}),
\widetilde{\sigma}
\bigr)
\]
is a homogeneous quandle.

Since $Spin(n)$ is simply connected and
$C_{Spin(n)}(\widetilde h_{(n,k)})$ is closed and connected
by \cite[Theorem~3.4]{Borel},
it follows from \cite[Proposition~3.6]{Helgason} that the homogeneous space
\[
C_{Spin(n)}(\widetilde h_{(n,k)})\backslash Spin(n)
\]
is the universal covering manifold of
\[
(SO(k)\times SO(n-k))\backslash SO(n).
\]

Hence there exists a $Spin(n)$-equivariant diffeomorphism
\[
\Phi\colon
C_{Spin(n)}(\widetilde h_{(n,k)})\backslash Spin(n)
\;\xrightarrow{\;\cong\;}
(SO(k)\times SO(n-k))\backslash SO(n),
\]
given by
\[
\Phi\bigl(
C_{Spin(n)}(\widetilde h_{(n,k)})\,\widetilde g
\bigr)
=
(SO(k)\times SO(n-k))\,p(\widetilde g).
\]

Since $\Phi$ is $Spin(n)$-equivariant and intertwines
$\widetilde{\sigma}$ and $\sigma$,
it induces an isomorphism of the associated homogeneous quandles.
Therefore, by Lemma~\ref{oriGr,isom},
the homogeneous quandle
\[
Q\bigl(
Spin(n),
C_{Spin(n)}(\widetilde h_{(n,k)}),
\widetilde{\sigma}
\bigr)
\]
is naturally identified with the oriented Grassmann quandle
$\widetilde{Gr}(n,k)$.
\end{proof}

By definition of $\widetilde{\sigma}$, we have $\mathrm{Fix}(\widetilde\sigma)=C_{Spin(n)}(\widetilde{h}_{(n,k)})$. 
In particular we have the following. 
\begin{theorem}\label{Grassmann embedding}
Assume that $n-k$ is even.
The oriented Grassmann quandle $\widetilde{Gr}(n,k)$
admits a quandle embedding
\begin{align*}
    \widetilde{Gr}(n,k)
&\rightarrow
\Conj(Spin(n))\\
V_0\widetilde{g}&\mapsto \widetilde{g}^{-1}\widetilde{h}_{(n,k)}\widetilde{g}
\end{align*}
\end{theorem}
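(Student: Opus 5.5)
The plan is to combine Proposition~\ref{Oriented grassmannspin} with Theorem~\ref{result} and Theorem~\ref{Main}, applied to the quandle triplet
\[
\bigl(Spin(n),\,C_{Spin(n)}(\widetilde h_{(n,k)}),\,\widetilde\sigma\bigr),\qquad \widetilde\sigma=\mathrm{Ad}(\widetilde h_{(n,k)}).
\]
First, Proposition~\ref{Oriented grassmannspin} gives a quandle isomorphism
\[
\widetilde{Gr}(n,k)\cong Q\bigl(Spin(n),C_{Spin(n)}(\widetilde h_{(n,k)}),\widetilde\sigma\bigr).
\]
Via the map $\Phi$ and Lemma~\ref{oriGr,isom}, the element $V_0\widetilde g=V_0\cdot\widetilde g$ corresponds to the coset $C_{Spin(n)}(\widetilde h_{(n,k)})\,\widetilde g$. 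I will check that this correspondence is well defined: two lifts $\widetilde g,\widetilde g'$ give the same oriented plane exactly when $\widetilde g'\widetilde g^{-1}\in p^{-1}(SO(k)\times SO(n-k))$. By the discussion preceding Proposition~\ref{Oriented grassmannspin}, that preimage equals $C_{Spin(n)}(\widetilde h_{(n,k)})$, so the cosets agree.

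Second, I verify the hypothesis of Theorem~\ref{result}, namely that $\mathrm{Fix}(\widetilde\sigma)=C_{Spin(n)}(\widetilde h_{(n,k)})$. This is immediate from $\widetilde\sigma(\widetilde g)=\widetilde h_{(n,k)}^{-1}\widetilde g\,\widetilde h_{(n,k)}$: a fixed point is precisely an element commuting with $\widetilde h_{(n,k)}$. Theorem~\ref{result} then shows that $\iota$ into $\mathrm{Conj}(Spin(n)\rtimes_{\widetilde\sigma}\mathbb Z)$ is injective. Since $\widetilde\sigma\in\mathrm{Inn}(Spin(n))$ with $q=\widetilde h_{(n,k)}$, Theorem~\ref{Main} shows that $\iota=\eta\circ\iota'$, where
\[
\iota'(C\widetilde g)=\widetilde g^{-1}\widetilde h_{(n,k)}\widetilde g .
\]
Because $\iota$ is injective, $\iota'$ is injective as well. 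It is a quandle homomorphism because $\eta$ is an injective homomorphism (Lemma~\ref{lem:conj-embed}) and $\iota$ is a homomorphism (Lemma~\ref{lem:iota-hom}). Alternatively, one can check directly that $\iota'(Hg*Hh)=\iota'(Hh)^{-1}\iota'(Hg)\iota'(Hh)$. Composing $\iota'$ with the isomorphism from the first step gives the stated map $V_0\widetilde g\mapsto\widetilde g^{-1}\widetilde h_{(n,k)}\widetilde g$.

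The only delicate point is the identification of cosets in the first step. It relies on the equality $C_{Spin(n)}(\widetilde h_{(n,k)})=p^{-1}(SO(k)\times SO(n-k))$ established before Proposition~\ref{Oriented grassmannspin}; in particular, the map is independent of the choice of lift $\widetilde g$. I would also note that the evenness of $n-k$ ensures $h_{(n,k)}\in SO(n)$, so the lift $\widetilde h_{(n,k)}\in Spin(n)$ exists. Everything else is a direct application of the earlier results.
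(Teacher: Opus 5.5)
Your proposal is correct and follows essentially the same route as the paper. You identify $\widetilde{Gr}(n,k)$ with $Q\bigl(Spin(n),C_{Spin(n)}(\widetilde h_{(n,k)}),\widetilde\sigma\bigr)$ via Proposition~\ref{Oriented grassmannspin}, note that $\mathrm{Fix}(\widetilde\sigma)=C_{Spin(n)}(\widetilde h_{(n,k)})$, and apply Theorems~\ref{result} and~\ref{Main} with $q=\widetilde h_{(n,k)}$, adding checks (independence of the lift, and injectivity and the homomorphism property of $\iota'$ via $\iota=\eta\circ\iota'$) that the paper leaves implicit.
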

Next, we consider the case where $n-k$ is odd.
In contrast to the even case, the involution $\sigma$ of $SO(n)$
does not lift to an inner automorphism of $Spin(n)$.
% Indeed, in this case we have $h_{(n,k)}\notin SO(n)$.
% Therefore the involution $\sigma(g)=h_{(n,k)}^{-1} g h_{(n,k)}$
% cannot be realized as an inner automorphism of $Spin(n)$.
However, $\sigma$ also gives an inner automorphism of $O(n)$ and it lifts to an inner automorphism 
\begin{equation*}
    \widetilde{\sigma}:=\mr{Ad}(\widetilde h_{(n,k)}): Pin(n)\to Pin(n).
\end{equation*}
We define the centralizer of $\widetilde h_{(n,k)}$ in $Pin(n)$ by
\[
C_{Pin(n)}(\widetilde h_{(n,k)})
=
\{\, \widetilde g\in Pin(n)\mid 
\widetilde g\widetilde h_{(n,k)}=\widetilde h_{(n,k)}\widetilde g \,\}.
\]
Similarly, we denote
\[
C_{Spin(n)}(\widetilde h_{(n,k)})
=
C_{Pin(n)}(\widetilde h_{(n,k)})\cap Spin(n)=\mr{Stab}_{Spin(n)}(V_0).
\]
The inclusion \(Spin(n)\subset Pin(n)\) induces a map
\begin{align*}
    \Psi\colon
C_{Spin(n)}(\widetilde h_{(n,k)})\backslash Spin(n)
&\rightarrow
C_{Pin(n)}(\widetilde h_{(n,k)})\backslash Pin(n)\\
C_{Spin(n)}(\widetilde{h}_{(n,k)})\,\widetilde g
&\mapsto
C_{Pin(n)}(\widetilde{h}_{(n,k)})\,\widetilde g .
\end{align*}

\begin{lemma}\label{lemma:PsiBijection}
The map $\Psi$ is a bijection.
\end{lemma}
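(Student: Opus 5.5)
Well-definedness and injectivity come first, and both are immediate from the definition $C_{Spin(n)}(\widetilde h_{(n,k)})=C_{Pin(n)}(\widetilde h_{(n,k)})\cap Spin(n)$. For well-definedness: if $\widetilde g'=c\widetilde g$ with $c\in C_{Spin(n)}(\widetilde h_{(n,k)})$, then $c\in C_{Pin(n)}(\widetilde h_{(n,k)})$, so both representatives give the same coset in $Pin(n)$. For injectivity: suppose $\widetilde g_1,\widetilde g_2\in Spin(n)$ satisfy $C_{Pin(n)}(\widetilde h_{(n,k)})\widetilde g_1=C_{Pin(n)}(\widetilde h_{(n,k)})\widetilde g_2$. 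Then $\widetilde g_2\widetilde g_1^{-1}$ lies in $C_{Pin(n)}(\widetilde h_{(n,k)})$, and also in $Spin(n)$ because $Spin(n)$ is a subgroup. Hence it lies in $C_{Spin(n)}(\widetilde h_{(n,k)})$, and the cosets in $Spin(n)$ agree.

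Surjectivity is the real content. Since $Spin(n)$ has index $2$ in $Pin(n)$, it suffices to show that $C_{Pin(n)}(\widetilde h_{(n,k)})$ contains an element of $Pin(n)\setminus Spin(n)$. Given such an element $c_0$ and any $\widetilde x\in Pin(n)\setminus Spin(n)$, we have $c_0^{-1}\widetilde x\in Spin(n)$. Since $\widetilde x=c_0\,(c_0^{-1}\widetilde x)$ lies in $C_{Pin(n)}(\widetilde h_{(n,k)})\,(c_0^{-1}\widetilde x)$, the coset of $\widetilde x$ is $\Psi$ of the coset of $c_0^{-1}\widetilde x$. Equivalently, $C_{Pin(n)}(\widetilde h_{(n,k)})\,Spin(n)=Pin(n)$.

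To produce $c_0$, work in the Clifford algebra. Take $\widetilde h_{(n,k)}=e_{k+1}e_{k+2}\cdots e_n$, a product of $n-k$ unit vectors. Up to sign, which does not affect centralizers, this is the lift of $h_{(n,k)}$, since each $e_j$ maps to the reflection in $e_j^\perp$. Here $n-k$ is odd, so $\widetilde h_{(n,k)}\in Pin(n)\setminus Spin(n)$ itself; this matches the setting, where $h_{(n,k)}\in O(n)\setminus SO(n)$. The candidate is $c_0=\widetilde h_{(n,k)}$, which trivially commutes with itself, so $c_0\in C_{Pin(n)}(\widetilde h_{(n,k)})\setminus Spin(n)$. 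If one prefers a non-tautological witness, one can also check directly in the Clifford algebra that $e_n$ commutes with $e_{k+1}\cdots e_n$ when $n-k$ is odd: moving $e_n$ past the $n-k-1$ factors other than $e_n$ produces the sign $(-1)^{n-k-1}=1$. Either witness completes surjectivity.

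The main obstacle I expect is conventions. One must confirm that the fixed lift $\widetilde h_{(n,k)}$ is indeed odd, so that it lies outside $Spin(n)$, and that the relevant twisted-adjoint conventions for $Pin^+(n)$ do not alter which elements commute in the group. Since commutation is a statement inside the group $Pin(n)$ and does not depend on how $Pin(n)$ acts on $\mathbb R^n$, the choice $c_0=\widetilde h_{(n,k)}$ avoids this issue entirely. The only remaining point is the parity claim, which follows from $\det h_{(n,k)}=(-1)^{n-k}=-1$ and the fact that $p^{-1}(SO(n))=Spin(n)$.
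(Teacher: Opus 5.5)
Your proof is correct. Well-definedness and injectivity are exactly as in the paper; the surjectivity argument takes a different route.

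The paper argues geometrically. For $\widetilde g'\in Pin(n)$ it uses transitivity of $Spin(n)$ on $\widetilde{Gr}(n,k)$ to pick $\widetilde g\in Spin(n)$ with $V_0\widetilde g=V_0\widetilde g'$. It then asserts that $\widetilde g'\widetilde g^{-1}\in C_{Pin(n)}(\widetilde h_{(n,k)})$ ``by definition of the stabilizer''. That step tacitly identifies the $Pin(n)$-stabilizer of the oriented plane $V_0$, namely $p^{-1}(SO(k)\times O(n-k))$, with the centralizer of $\widetilde h_{(n,k)}$. The identification holds when $n-k$ is odd, but checking it needs Clifford-algebra sign computations of the kind you sketch for $e_n$: vectors in the last $n-k$ coordinates commute with $e_{k+1}\cdots e_n$, and vectors in the first $k$ coordinates anticommute with it, so their even products commute. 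The paper does not supply this.

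Your argument bypasses all of it. You use only that $Spin(n)=p^{-1}(SO(n))$ has index $2$ in $Pin(n)$, so any subgroup meeting $Pin(n)\setminus Spin(n)$ satisfies $C_{Pin(n)}(\widetilde h_{(n,k)})\,Spin(n)=Pin(n)$. The witness is $\widetilde h_{(n,k)}$ itself, because $\det h_{(n,k)}=(-1)^{n-k}=-1$. This is shorter, needs no transitivity or stabilizer computation, and does not depend on the $Pin^{\pm}$ or adjoint-action conventions.

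What the paper's route aims at, beyond the lemma, is the compatibility of both coset spaces with $\widetilde{Gr}(n,k)$, which it uses immediately afterwards. With your approach, $\Psi$ is automatically a quandle isomorphism, since it is induced by an inclusion that commutes with $\widetilde\sigma$. The only geometric input still needed elsewhere is the separate fact $C_{Spin(n)}(\widetilde h_{(n,k)})=\mathrm{Stab}_{Spin(n)}(V_0)$.
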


\begin{proof}
The map $\Psi$ is well-defined since
$C_{Spin(n)}(\widetilde h_{(n,k)})
\subset
C_{Pin(n)}(\widetilde h_{(n,k)})$.

To prove injectivity, suppose that
\[
C_{Pin(n)}(\widetilde{h}_{(n,k)})\,\widetilde g
=
C_{Pin(n)}(\widetilde{h}_{(n,k)})\,\widetilde g'
\]
for $\widetilde g,\widetilde g'\in Spin(n)$.
Then $\widetilde g' \widetilde g^{-1}\in C_{Pin(n)}(\widetilde h_{(n,k)})\cap Spin(n)=C_{Spin(n)}(\widetilde h_{(n,k)})$.
Therefore,
\(
C_{Spin(n)}(\widetilde h_{(n,k)})\,\widetilde g
=
C_{Spin(n)}(\widetilde h_{(n,k)})\,\widetilde g'
\),
so $\Psi$ is injective.

% Surjectivity follows from the fact that the natural actions of
% $Spin(n)$ and $Pin(n)$ on the oriented Grassmann manifold
% $\widetilde{Gr}(n,k)$ are transitive.

To show surjectivity, let 
\[
C_{Pin(n)}(\widetilde h_{(n,k)}) \, \widetilde g' \in C_{Pin(n)}(\widetilde h_{(n,k)}) \backslash Pin(n)
\]
be an arbitrary coset.  
Since the action of ${Spin}(n)$ on the oriented Grassmann manifold 
$\widetilde{Gr}(n,k)$ is transitive, there exists 
$\widetilde g \in {Spin}(n)$ such that
\[
\widetilde g \cdot V_0 = \widetilde g' \cdot V_0,
\]
where $V_0$ is the reference oriented subspace.  
By definition of the stabilizer, there exists 
$h \in \mathrm{Stab}_{Pin}(\widetilde h_{(n,k)})$ such that
\[
\widetilde g' = h \widetilde g.
\]
Hence
\[
\Psi(C_{Spin(n)}(\widetilde h_{(n,k)}) \, \widetilde g) 
= C_{Pin(n)}(\widetilde h_{(n,k)}) \, \widetilde g 
= C_{Pin(n)}(\widetilde h_{(n,k)}) \, \widetilde g' ,
\]
showing that $\Psi$ is surjective.  

Consequently, every coset in 
\(C_{Pin(n)}(\widetilde h_{(n,k)})\backslash Pin(n)\)
has a representative in \({Spin}(n)\).
% Since the action of $\mathrm{Spin}(n)$ on the oriented Grassmann manifold $\widetilde{Gr}(n,k)$ is transitive, 
% there exists $\widetilde g \in \mathrm{Spin}(n)$ such that
% \[
% \widetilde g \cdot V_0 = \widetilde g' \cdot V_0,
% \]
% where $V_0$ is the reference oriented subspace. 

% By definition of the map $\Psi$, we have
% \[
% \Psi(\mathrm{Stab}_{Spin}(\widetilde h_{(n,k)})\, \widetilde g) 
% = \mathrm{Stab}_{Pin}(\widetilde h_{(n,k)})\, \widetilde g 
% = \mathrm{Stab}_{Pin}(\widetilde h_{(n,k)})\, \widetilde g' .
% \]
% Since the coset was arbitrary, $\Psi$ is surjective.
% Hence every coset in
% $\mathrm{Stab}_{Pin}(\widetilde h_{(n,k)})\backslash Pin(n)$
% has a representative in $Spin(n)$.

\end{proof}

% Since the natural actions of $Spin(n)$ and $Pin(n)$ on the oriented
% Grassmann manifold $\widetilde{Gr}(n,k)$ have the same orbits,
% both homogeneous spaces
% \[
% \mathrm{Stab}_{Spin}(\widetilde h_{(n,k)})\backslash Spin(n),
% \qquad
% \mathrm{Stab}_{Pin}(\widetilde h_{(n,k)})\backslash Pin(n)
% \]
% are naturally identified with $\widetilde{Gr}(n,k)$.
% By definition, the quandle structure on
% $Q\bigl(Spin(n),\mathrm{Stab}_{Spin}(\widetilde h_{(n,k)}),\widetilde\sigma\bigr)$
% is induced by the conjugation automorphism $\widetilde\sigma$,
% and the identification given by $\Psi$ is $Spin(n)$--equivariant.
% Therefore, the bijection $\Psi$ yields an isomorphism of homogeneous quandles,
% and hence
% \[
% Q\bigl(Pin(n),\mathrm{Stab}_{Pin}(\widetilde h_{(n,k)}),\widetilde\sigma\bigr)
% \cong
% \widetilde{Gr}(n,k).
% \]
% as quandles.
Since the homogeneous quandles
$Q\bigl(Spin(n),C_{Spin(n)}(\widetilde h_{(n,k)}),\sigma|_{Spin(n)}\bigr)$ and \\
$Q\bigl(Pin(n),C_{Pin(n)}(\widetilde h_{(n,k)}),\sigma\bigr)$
are defined by the conjugation automorphism $\sigma$ and its restriction
to $Spin(n)$, and since the above bijection
\[
C_{Spin(n)}(\widetilde h_{(n,k)})\backslash Spin(n)
\;\cong\;
C_{Pin(n)}(\widetilde h_{(n,k)})\backslash Pin(n)
\]
is an isomorphism of homogeneous spaces induced by the natural actions
of $Spin(n)$ and $Pin(n)$ on the oriented Grassmann manifold
$\widetilde{Gr}(n,k)$,
it follows that the two homogeneous quandles are isomorphic.
Since $\widetilde{\sigma}$ is an inner automorphism of $Pin(n)$
and $\Fix(\widetilde{\sigma})=C_{Pin(n)}(\widetilde h_{(n,k)})$,
Theorem \ref{Main} implies the following. 
\begin{theorem}
    Assume that $n-k$ is odd. The oriented Grassmann quandle admits an embedding
\begin{align*}
    \widetilde{Gr}(n,k)
&\rightarrow
\Conj(Pin(n))\\
V_0\widetilde{g}&\mapsto \widetilde{g}^{-1}\widetilde{h}_{(n,k)}\widetilde{g}.
\end{align*}

\end{theorem}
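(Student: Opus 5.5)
The embedding will be produced by combining three ingredients: the isomorphism of Lemma~\ref{oriGr,isom} (or rather its odd analogue), the bijection $\Psi$ of Lemma~\ref{lemma:PsiBijection}, and Theorems~\ref{result} and~\ref{Main} applied to the triplet $\bigl(Pin(n),C_{Pin(n)}(\widetilde h_{(n,k)}),\widetilde\sigma\bigr)$.

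\textbf{Step 1: a quandle triplet.} Since $\widetilde\sigma=\mathrm{Ad}(\widetilde h_{(n,k)})$, we have $\widetilde\sigma(\widetilde g)=\widetilde g$ if and only if $\widetilde g$ commutes with $\widetilde h_{(n,k)}$. Hence
\[
\Fix(\widetilde\sigma)=C_{Pin(n)}(\widetilde h_{(n,k)}),
\]
and $\bigl(Pin(n),C_{Pin(n)}(\widetilde h_{(n,k)}),\widetilde\sigma\bigr)$ is a quandle triplet. Taking $q=\widetilde h_{(n,k)}$ in Theorem~\ref{Main}, and using Theorem~\ref{result} for injectivity, the map
\[
C_{Pin(n)}(\widetilde h_{(n,k)})\,\widetilde g\longmapsto \widetilde g^{-1}\widetilde h_{(n,k)}\widetilde g
\]
is a quandle embedding of $Q\bigl(Pin(n),C_{Pin(n)}(\widetilde h_{(n,k)}),\widetilde\sigma\bigr)$ into $\Conj(Pin(n))$.

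\textbf{Step 2: identify this quandle with $\widetilde{Gr}(n,k)$.} This is the main obstacle. First, $\sigma=\mathrm{Ad}(h_{(n,k)})$ restricts to an automorphism of $SO(n)$, because $SO(n)$ is normal in $O(n)$. The computation in Lemma~\ref{oriGr,isom} never used $h_{(n,k)}\in SO(n)$; it only used that right multiplication by $h_{(n,k)}$ fixes the oriented plane $V_0$. That holds here because $h_{(n,k)}$ acts as the identity on $e_1,\dots,e_k$. So $\widetilde{Gr}(n,k)\cong Q\bigl(SO(n),SO(k)\times SO(n-k),\sigma|_{SO(n)}\bigr)$, with the quandle operation given by $V_0g*V_0h=V_0gh^{-1}h_{(n,k)}h$.

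Next, the coset space $C_{Spin(n)}(\widetilde h_{(n,k)})\backslash Spin(n)$ maps to $\widetilde{Gr}(n,k)$ by $\widetilde g\mapsto V_0\,p(\widetilde g)$. This map is a bijection because $C_{Spin(n)}(\widetilde h_{(n,k)})=\mathrm{Stab}_{Spin(n)}(V_0)$, as stated before Lemma~\ref{lemma:PsiBijection}. It intertwines $\widetilde\sigma|_{Spin(n)}$ with $\sigma$, since $p\circ\widetilde\sigma=\sigma\circ p$. The homomorphism check is then the same one-line computation as in Lemma~\ref{oriGr,isom}, with $p(\widetilde h_{(n,k)})=h_{(n,k)}$.

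Finally, compose with $\Psi$ from Lemma~\ref{lemma:PsiBijection}, which is a bijection. It is a quandle homomorphism because for $\widetilde g,\widetilde h\in Spin(n)$ the product $\widetilde\sigma(\widetilde g\widetilde h^{-1})\widetilde h$ lies in $Spin(n)$. So the operation on $Q\bigl(Pin(n),C_{Pin(n)}(\widetilde h_{(n,k)}),\widetilde\sigma\bigr)$, evaluated on representatives chosen in $Spin(n)$, agrees with the operation on $Q\bigl(Spin(n),C_{Spin(n)}(\widetilde h_{(n,k)}),\widetilde\sigma|_{Spin(n)}\bigr)$. This uses that $\widetilde\sigma$ preserves $Spin(n)$, which holds because $Spin(n)$ has index $2$ in $Pin(n)$ and is therefore normal.

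\textbf{Step 3: composing.} Composing the isomorphisms of Step 2 with the embedding of Step 1 gives an injective quandle homomorphism $\widetilde{Gr}(n,k)\to\Conj(Pin(n))$. Writing the point $V_0\widetilde g$ with $\widetilde g\in Spin(n)$, it sends $V_0\widetilde g\mapsto \widetilde g^{-1}\widetilde h_{(n,k)}\widetilde g$.

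Two checks remain. The first is well-definedness with respect to the choice of $\widetilde g$. This follows because the formula depends only on the coset modulo the centralizer, and $\mathrm{Stab}_{Spin(n)}(V_0)=C_{Spin(n)}(\widetilde h_{(n,k)})$. The second is injectivity, which is exactly Theorem~\ref{result} transported through the bijections. The point requiring the most care is the intertwining of $\widetilde\sigma$ with $\sigma$ through $p$ when $\widetilde h_{(n,k)}\notin Spin(n)$. It follows directly from $p(\widetilde x\widetilde y\widetilde x^{-1})=p(\widetilde x)p(\widetilde y)p(\widetilde x)^{-1}$ for the twisted adjoint covering $Pin^+(n)\to O(n)$.
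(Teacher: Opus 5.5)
Your proposal is correct and takes the paper's route: identify $\widetilde{Gr}(n,k)$ with $Q\bigl(Spin(n),C_{Spin(n)}(\widetilde h_{(n,k)}),\widetilde\sigma|_{Spin(n)}\bigr)$, pass to $Q\bigl(Pin(n),C_{Pin(n)}(\widetilde h_{(n,k)}),\widetilde\sigma\bigr)$ through the bijection $\Psi$, and then apply Theorems~\ref{result} and~\ref{Main} with $q=\widetilde h_{(n,k)}$ and $\Fix(\widetilde\sigma)=C_{Pin(n)}(\widetilde h_{(n,k)})$. You also spell out two steps the paper only asserts: that $\Psi$ respects the operation because $\widetilde\sigma$ preserves the normal subgroup $Spin(n)$, and that the $Spin(n)$ coset space maps bijectively onto $\widetilde{Gr}(n,k)$ via the stabilizer.
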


\begin{rem}\label{GrtoS^n}
    When $k=1$, the oriented Grassmann manifold
$\widetilde{{Gr}}(n,1)$ is naturally identified with the sphere $S^{n-1}$,
and the corresponding quandle structure coincides with the spherical quandle.
Moreover, the embedding constructed above reduces to the embedding
described in the theorem of Eisermann\cite{Eisermann} and Yonemura\cite{Yonemura}.
\end{rem}

\section*{Acknowledgements}
This paper is a part of the author's master's thesis.
The author would like to express her sincere gratitude to her supervisor, 
Hajime Fujita, for his continuous guidance and encouragement
throughout this research.
The author is also grateful to Hiroshi Tamaru
for introducing the notion of homogeneous quandles and for his important
suggestions on the direction of this research.
The author thanks Ryoya Kai
for helpful discussions on the case where the automorphism $\sigma$ in the main
theorem is not an inner automorphism, as well as for his insights into future
problems.
Finally, the author is grateful to Kentaro Yonemura
for kindly answering questions and for introducing many important references.

% -----------------------------
% 参考文献
% -----------------------------

\end{document}